%%
%% Copyright 2007, 2008, 2009 Elsevier Ltd
%%
%% This file is part of the 'Elsarticle Bundle'.
%% ---------------------------------------------
%%
%% It may be distributed under the conditions of the LaTeX Project Public
%% License, either version 1.2 of this license or (at your option) any
%% later version.  The latest version of this license is in
%%    http://www.latex-project.org/lppl.txt
%% and version 1.2 or later is part of all distributions of LaTeX
%% version 1999/12/01 or later.
%%
%% The list of all files belonging to the 'Elsarticle Bundle' is
%% given in the file `manifest.txt'.
%%

%% Template article for Elsevier's document class `elsarticle'
%% with numbered style bibliographic references
%% SP 2008/03/01

\documentclass[preprint,12pt]{elsarticle}

%% Use the option review to obtain double line spacing
%% \documentclass[authoryear,preprint,review,12pt]{elsarticle}

%% Use the options 1p,twocolumn; 3p; 3p,twocolumn; 5p; or 5p,twocolumn
%% for a journal layout:
%% \documentclass[final,1p,times]{elsarticle}
%% \documentclass[final,1p,times,twocolumn]{elsarticle}
%% \documentclass[final,3p,times]{elsarticle}
%% \documentclass[final,3p,times,twocolumn]{elsarticle}
%% \documentclass[final,5p,times]{elsarticle}
%% \documentclass[final,5p,times,twocolumn]{elsarticle}

%% For including figures, graphicx.sty has been loaded in
%% elsarticle.cls. If you prefer to use the old commands
%% please give 

\usepackage{tikz}
\usetikzlibrary{shapes,arrows,calc}

\usepackage{enumitem}

%\usepackage{hypernat}
%\usepackage{cuted}
%\usepackage{midfloat}
%\usepackage{multicol}

%% Pakages related with tables
%\usepackage{tabularx}   % Automatic calculation of column widths
%\usepackage{tabulary}  % Colomn widths base on content

%\usepackage{colortbl}
%\usepackage{booktabs}

%%%% Pakages related with mathematical expressions %%%%%%%%%%%%%%%%%%%%%%%%%%%%%%%%%%%
\usepackage{amsmath,amssymb,ushort}
%%%%%%%%%%%%%%%%%%%%%%%%%%%%%%%%%%%%%%%%%%%%%%%%%%%%%%%%%%%%%%%%%%%%%%%%%%%%%%%%%%%%%%
%\usepackage{srcltx} % 문장 앞에 띄어쓰기(ex. \quad)를 가능하게 한다.
%\usepackage{booktabs}
%\usepackage{enumerate}
%\usepackage{psfrag}
%\usepackage{hhline}
%\uadpackage{color,subeqnarray}
%\usepackage{entcsmacro}
%\usepackage{multirow}

%%%% Pakages related with figures & graphics %%%%%%%%%%%%%%%%%%%%%%%%%%%%%%%%%%%%%%%%%
\usepackage[dvips]{epsfig}     % for the use of \epsfig
\usepackage{subfigure}         % for the use of \subfigure
%\usepackage{graphicx}          % for the use of \includegraphics
%%%%%%%%%%%%%%%%%%%%%%%%%%%%%%%%%%%%%%%%%%%%%%%%%%%%%%%%%%%%%%%%%%%%%%%%%%%%%%%%%%%%%%

%%%% 참고문헌을 위한 하이퍼 링크 만들고, 본문중 레퍼런스를 파란색으로 표시 %%%%%%%%%%%%%%%%%%%%%%%%%%%%%%%%%%%%%%%%%%%%%%%
\usepackage[unicode]{hyperref}
\hypersetup{breaklinks=true,colorlinks=true,linkcolor=blue,anchorcolor=blue,citecolor=blue,filecolor=blue,urlcolor=blue,bookmarksnumbered=true}
%\hypersetup{breaklinks=true,colorlinks=true,citecolor=blue}
%%%%%%%%%%%%%%%%%%%%%%%%%%%%%%%%%%%%%%%%%%%%%%%%%%%%%%%%%%%%%%%%%%%%%%%%%%%%%%%%%%%%%%

%\usepackage[unicode]{hyperref}

%%%% Pakages related with Reference Citations %%%%%%%%%%%%%%%%%%%%%%%%%%%%%%%%%%%%%%%%
%\usepackage{natbib}  %% for the advanced citing (\citep, \citet, \citealp, ...)
%\usepackage[noadjust]{cite}  %% for the advanced use of \cite
%%%%%%%%%%%%%%%%%%%%%%%%%%%%%%%%%%%%%%%%%%%%%%%%%%%%%%%%%%%%%%%%%%%%%%%%%%%%%%%%%%%%%%

%\renewcommand{\arraystretch}{1.2}%전체 표의 줄간격을 조절해 준다.

%% The amsthm package provides extended theorem environments
\usepackage{amsthm}

%% The lineno packages adds line numbers. Start line numbering with
%% \begin{linenumbers}, end it with \end{linenumbers}. Or switch it on
%% for the whole article with \linenumbers.
%% \usepackage{lineno}

%% 수식을 작고 세련되게 만들어 주는 패키지들 %%%%%%%%%%%%%%%%%%%%%%%%%
%\usepackage{mathptmx} % assumes new font selection scheme installed
%\usepackage{txfonts}
%\usepackage{pxfonts}
%\usepackage{fourier}

%% 폰트를 작고 세련되게 만들어 준다 %%%%%%%%%%%%%%%%%%%%%%%%%%%%%%%
\usepackage{times} % assumes new font selection scheme installed
%%%%%%%%%%%%%%%%%%%%%%%%%%%%%%%%%%%%%%%%%%%%%%%%%%%%%%%%%%%%%%%%%%%

\theoremstyle{theorem}
\newtheorem{theorem}{Theorem}
\theoremstyle{corollary}
\newtheorem{corollary}{Corollary}
\theoremstyle{lemma}
\newtheorem{lemma}{Lemma}

\theoremstyle{assumption}
\newtheorem{assumption}{Assumption}
\theoremstyle{remark}
\newtheorem{remark}{Remark}
\theoremstyle{definition}
\newtheorem{definition}{Definition}
\theoremstyle{proposition}
\newtheorem{proposition}{Proposition}

\journal{Systems \& Control Letters}

\begin{document}

\begin{frontmatter}

%% Title, authors and addresses

%% use the tnoteref command within \title for footnotes;
%% use the tnotetext command for theassociated footnote;
%% use the fnref command within \author or \address for footnotes;
%% use the fntext command for theassociated footnote;
%% use the corref command within \author for corresponding author footnotes;
%% use the cortext command for theassociated footnote;
%% use the ead command for the email address,
%% and the form \ead[url] for the home page:
%% \title{Title\tnoteref{label1}}
%% \tnotetext[label1]{}
%% \author{Name\corref{cor1}\fnref{label2}}
%% \ead{email address}
%% \ead[url]{home page}
%% \fntext[label2]{}
%% \cortext[cor1]{}
%% \address{Address\fnref{label3}}
%% \fntext[label3]{}

\title{Invariantly admissible policy iteration for a class of \\ nonlinear optimal control problems}

%% use optional labels to link authors explicitly to addresses:
%% \author[label1,label2]{}
%% \address[label1]{}
%% \address[label2]{}

\author[Yonsei Univ]{Jae Young Lee}\ead{jyounglee@yonsei.ac.kr}
\author[Yonsei Univ]{Jin Bae Park\corref{cor1}}\ead{jbpark@yonsei.ac.kr}
\author[Kyonggi Univ]{Yoon Ho Choi}\ead{yhchoi@kyonggi.ac.kr}
\address[Yonsei Univ]{Department of Electrical and Electronic Engineering, Yonsei University, 50 Yonsei-ro, Seodaemun-gu, Seoul, Korea}
\address[Kyonggi Univ]{Department of Electronic Engineering, Kyonggi University, 154-42 Gwanggyosan-ro, Yeongtong-gu, Suwon, Kyonggi-Do, Korea}
\cortext[cor1]{Corresponding author. Tel.: +82-2-2123-2773.}
%\thanks{$^*$ Corresponding author.}

\begin{abstract}
  In this paper, we propose
  a generalized successive approximation method (SAM),
  called invariantly admissible policy iteration (PI),
  for finding the solution
  to a class of input-affine nonlinear optimal control problems
  by iterations.
  Unlike the existing SAM,
  the proposed method updates the domain of the next policy and value function
  for admissibility (and invariance).
  In the existing SAM,
  the admissibility of the generated policies are guaranteed
  under the two implicit assumptions regarding Lyapunov's theorem and invariance,
  both of which are presented and discussed in this paper and are generally not true.
  On the contrary,
  the proposed invariantly admissible PI guarantees the admissibility in a more refined manner,
  without such assumptions.
  The admissibility and invariance of the updated region,
  with respect to the corresponding policies,
  are mathematically prove under the specific invariant admissible update rule.
  We also provide monotonic decreasing and uniform convergence properties of
  the sequence of value functions under certain conditions.
  Finally, numerical simulations are presented
  to illustrate the proposed PI method and its effectiveness.
\end{abstract}

\begin{keyword}
    nonlinear optimal control \sep
    policy iteration \sep
    successive approximation \sep
    admissible policy \sep
    nonlinear systems

%% keywords here, in the form: keyword \sep keyword
%% PACS codes here, in the form: \PACS code \sep code
%% MSC codes here, in the form: \MSC code \sep code
%% or \MSC[2008] code \sep code (2000 is the default)

\end{keyword}

\end{frontmatter}

%% \linenumbers

\section{Introduction}

  In nonlinear optimal control problems,
  it is well-known
  that
  the optimal solution is
  directly related to the solution of the underlying Hamilton-Jacobi-Bellman (HJB) equation
  \cite{Bellman1957,Kirk2004,Lewis1995}.
  However,
  solving the HJB equation
  has been a formidable task until recently;
  hence,
  many of the numerical algorithms have been proposed
  for efficiently calculating the solution to the HJB equation
  \cite{Abu-Khalaf2005,Adhyaru2011,Alwardi2012,Beard1995,Beard1997,Cheng2007,Lee2012_WCCI,Lewis2009,Leake1967,Munos1999,Murray2002,Saridis1979,Spencer1996,Vrabie2009b,Wang2003}.
  %{\color{blue} [Ref...]}.

  Among such numerical algorithms,
  the successive approximation method (SAM)
  given in \cite{Beard1995,Beard1997,Leake1967,Saridis1979}
  has provided
  one basic idea of recursively solving the HJB equation.
  %by performing policy evaluation and policy improvement steps.
  The algorithm starts with
  an initial admissible policy;
  during the recursions of the method,
  the agent finds
  the value function associated with the current policy
  %a Lyapunov function with certain desired properties
  (policy evaluation),
  %by finding the solution of the so-called generalized HBJ equation,
  and then the policy is updated using this associated value function
  (policy improvement).
  A class of algorithms using this idea
  is called policy iteration (PI),
  and
  %after the pioneering works \cite{Beard1997,Saridis1979},
  many researchers
  have studied this idea
  %in various optimal control frameworks,
  in various ways
  and proposed
  their own algorithms
  from the perspectives of
  optimal control,
  adaptive (neuro-) dynamic programming,
  and reinforcement learning
  \cite{Abu-Khalaf2005,Adhyaru2011,Cheng2007,Lee2012_WCCI,Lewis2009,Murray2002,Vrabie2009b,Lee2012,Murray2003}.

  The PI method focused on in this paper is
  the SAM given by Beard, Saridis, and Wen \cite{Beard1995,Beard1997},
  which can be considered the infinite-horizon special case of the SAM given by Leake and Liu \cite{Leake1967},
  and becomes Newton method \cite{Kleinman1968}
  in the case of linear quadratic regulation (LQR).
  Note that many of the PI methods were also developed
  within the same optimal control framework as the SAM \cite{Beard1995,Beard1997,Saridis1979},
  and ideally
  all of them generate the same sequences of value functions
  and policies \cite{Lee2012_WCCI,Lewis2009,Murray2002,Vrabie2009b,Lee2012}.
  In other words,
  those PI algorithms
  %within the same optimal control framework
  %\eqref{eq:system}--\eqref{eq:performance measure}
  can be considered the equivalents,
  and hence can be indirectly studied
  by analyzing the SAM of Beard et al. \cite{Beard1995,Beard1997}
  as a representative.

  The admissibility of the policies generated by the SAM \cite{Beard1995,Beard1997}
  is the motivation of this paper.
  Here, the admissibility of a policy roughly implies that
  the policy asymptotically stabilizes the system,
  and guarantees the finite value function on the domain of interest.
  In Theorem 5.3.1 in \cite{Beard1995},
  % together with the convergence of the SAM,
  it was stated that
  the policies generated by the SAM \cite{Beard1995,Beard1997}
  are all admissible on the domain,
  and the sequence of the associated value functions is monotonically decreasing
  and converges to the optimal one,
  implying the improvement of the policy up to the optimal one.
  The proof was conducted based on Lemma 5.2.4 in \cite{Beard1995},
  which states the admissibility of the updated policy
  and the pointwisely monotonic decreasing property of the associated value functions.
  However,
  the related Lyapunov's theorem (Theorem 3.13 in \cite{Beard1995})
  used in its proof %of \cite[Lemma 5.2.4]{Beard1995}
  for the infinite-horizon case
  implicitly assumed that
  \emph{the domain of the Lyapunov function
  is a subset of the stabilizing region}, and
  that \emph{the state trajectory generated by the nonlinear dynamics remains in that Lyapunov domain,
  so its existence is guaranteed for all future time}.
  The problem here is that
  \emph{both implicit assumptions on the Lyapunov domain are not true in general,
  as discussed in this paper}
  (see also Chapters 4.1, 4.2, and 8.2 in \cite{Khalil2002}, and Theorem 3.3 in \cite{Khalil2002}).
  To the best authors' knowledge,
  this problem does not happen only in the case of LQR
  since the stabilizing region becomes the entire $\mathbb{R}^n$-space
  and the state trajectory always exists for all time.

  To solve the aforementioned admissibility problem related to the nonlinear SAM \cite{Beard1995,Beard1997},
  this paper proposes a generalized SAM called invariantly admissible PI,
  which has an additional process to properly update the next admissible invariant region
  after each policy improvement step.
  For this,
  we refine and generalize
  the notion of an admissible policy
  given in \cite{Beard1995,Beard1997,Lewis2009}.
  Then, an invariantly admissible policy is precisely defined
  with detailed discussions
  on its necessity, the relevant Lyapunov's theorem,
  and the value functions for the underlying optimal control problem.
  From the discussions,
  a specific update rule for the invariantly admissible region in the proposed PI is presented.
  Without the aforementioned two implicit assumptions related to
  the Lyapunov's theorem
  (Theorem 3.13 in \cite{Beard1995}),
  it \emph{is} proven in this paper
  that
  the next region generated by the update rule
  is invariant and admissible for the current and next policies,
  and the sequence of corresponding value functions is monotonically decreasing.
  The conditions for convergence to the optimal solution
  %and (invariant) admissibility in the limit
  are also provided with detailed discussions.
  Finally, numerical simulations are presented to illustrate the proposed PI method
  and its effectiveness.

  \section{Notations and mathematical terminology}
  %For two sets $A$ and $B$,
  %``$A \subseteq B$'' and ``$A \subset B$''
  %indicate that $A$ is a subset and a proper subset of $B$, respectively.
  $\mathbb{R}_+$ denotes the set of all nonnegative real numbers, \emph{i.e.},
  $\mathbb{R}_+ = [0, \infty)$;
  the set of all $n \times 1$ real vectors and $n \times m$ real matrices
  are denoted by $\mathbb{R}^{n}$ and $\mathbb{R}^{n \times m}$, respectively;
  $(\cdot)^T$ is the matrix transpose;
  $\|\cdot\|$ denotes a norm on a vector space $\mathbb{R}^n$.
  Throughout the paper,
  $\Omega$ (resp. $\bar \Omega$) denotes
  a subset of (resp. an invariant subset of)
  the given domain $\mathcal{D} \subseteq \mathbb{R}^n$ of the nonlinear dynamics.
  Here, the over-bar in $\bar \Omega$ means that it could be a compact set for some nice properties.
  The boundary of a subset $\Omega$ is denoted by $\partial \Omega$.
  All the mathematical notations
  including those given below
  will be clear and be precisely defined in this paper.

  \begin{description} \itemsep-0.3em
  \small
    \item [$\mathcal{A}(\Omega)$]: the set of all policies that are admissible on a subset $\Omega$;
    \item [$\mathcal{A}_{\mathcal{I}}(\Omega)$]: the set of all invariantly admissible policies on a subset $\Omega$;
    \item [$\mathcal{C}^0(\Omega)$]: the set of all continuous functions on a domain $\Omega$;
    %\item
    %$\mathcal{C}^0_+(\Omega)$: the set of all positive definite functions on a domain $\Omega$;
    \item [$\mathcal{C}^1(\Omega)$]: the set of all continuously differentiable functions on a domain $\Omega$;
    \item [$\bar B_0(r)$]: the closed ball in $\mathbb{R}^n$ with radius $r$. That is,
    $
        \bar B^0(r) := \{ x \in \mathbb{R}^n: \|x\| \leq r \};
    $
    \item [$R_A(\mu)$]: the region of attraction of the closed-loop system $\dot x = f(x) + g(x)\mu(x)$ in $\mathcal{D}$;    \\[-5pt]
    \item [$V$]: a Lyapunov function for an asymptotically stable closed-loop system;
    \item [$V^{\mu}$]: a value function for an (invariantly) admissible policy $\mu$;
    \item [$\nabla V^{\mu}$]: the gradient column vector of a value function $V^{\mu}$;
    %\\[-10pt]
    \item [$V^*$]: the optimal value function;
    \item [$\mu$]: a policy $u = \mu(x)$ for the nonlinear system $\dot x = f(x) + g(x)u$;
    \item [$\mu^*$]: the optimal policy;
    \\[-5pt]
    \item [$\bar \Omega_c$]: the compact subset of a domain $\Omega$
                     defined with $V: \Omega \to \mathbb{R}_+$ by \eqref{eq:Omega_c};
    \item [$\bar \Omega_c^\mu$]: the compact subset of a domain $\Omega$ defined with $V^\mu: \Omega \to \mathbb{R}_+$ by \eqref{eq:Omega_c^mu};
    \item [$\Omega^*$]: the domain of $V^*$ on which $V^*$ is $\mathcal{C}^1$ and satisfies the HJB equation \eqref{eq:HJB}.
  \end{description}
  The notations related to the invariantly admissible PI
  are summarized as follows:
  \begin{description} \itemsep-0.3em
  \small
    \item [$\mu_i$]: the updated policy at $i$-th iteration;
    \item [$V^{\mu_i}$]: the value function for the policy $\mu$ obtained at $i$-th iteration;
    \item [$\hat V$]: the limit function to which $\{V^{\mu_i}\}$ converges;
    \item [$\bar \Omega_{c_i}^{\mu_i}$]: a compact set defined as
                                       $\bar \Omega_{c_i}^{\mu_i} := \{ x \in \mathbb{R}^n: V^{\mu_i}(x) \leq c_i \}$;
    \item [$\Omega_i$]: the updated region at $i$-th iteration such that $\Omega_i \subseteq \Omega_{i-1}$;
    \item [$\hat \Omega$]: the limit set of $\Omega_i$ defined as $\hat \Omega = \bigcap_{i=0}^\infty \Omega_i$;
    \item [$\mathcal{C}^1_\mathcal{A}(\hat \Omega)$]:
                        the set of all continuously differentiable value functions $V^\mu$
                        for $\mu \in \mathcal{A}({\hat \Omega})$.
    %\item
    %$c_i$: the positive constant at $i$-th iteration
    %       that determines $\bar \Omega_{c_i}^{\mu_i}$ satisfying
    %       $\bar \Omega_{c_i}^{\mu_i} \subseteq \Omega_i$.
  \end{description}

  \noindent
  \textbf{Terminology.}
  All the subsets in $\mathbb{R}^n$ (or in $\mathcal{D}$)
  presented in this paper
  are assumed to contain a neighborhood of the origin, and
  without loss of generality, have no isolated region or point from the origin.
  Using the above notations, a positive definite (resp. negative definite)
  function is precisely defined as

  \begin{definition}
    A function $V: \Psi \to \mathbb{R}_+$,
    where the domain $\Psi$ is a subset of $\mathbb{R}^{p}$ for some $p \in \{1,2,\cdots\}$
    containing a neighborhood of the origin,
    is said to be positive definite
    (resp. negative definite)
    on $\Psi$
    %denoted by $V \in \mathcal{C}_+^0 (\Psi)$
    %(resp. $V \in \mathcal{C}_-^0 (\Psi)$),
    if and only if it is continuous on $\Psi$, $V(0) = 0$, and
    $V(x) > 0$ (resp. $V(x) < 0$) for all $x \in \Psi \setminus \{0\}$.
  \end{definition}

  \section{Preliminaries: invariant admissibility and nonlinear optimal control problems}

  In this paper,
  we consider the infinite-horizon nonlinear optimal control problem
  \eqref{eq:system}--\eqref{eq:performance measure}
  for the following continuous-time nonlinear system for time $t \in \mathbb{R}_+$
  \begin{equation}
    \dot x(t) = f(x(t)) + g(x(t))u(x(t)), \;\; x(0) = x_0 \in \mathcal{D} \subseteq \mathbb{R}^n,
    \label{eq:system}
  \end{equation}
  \begin{equation*}
  \textrm{where }
  \begin{cases}
        x: \mathbb{R}_+ \to \mathbb{R}^n: \textrm{the system state for time $t \in \mathbb{R}_+$}; \\
        u: \mathbb{R}^n \to \mathbb{R}^m: \textrm{the control input function governed by a given control (or } \\
        \qquad\qquad\qquad\;\; \textrm{policy) $\mu(x)$ (see Definition \ref{def:policy});}\\
        \mathcal{D} \subseteq \mathbb{R}^n:  \textrm{the domain of $f$ and $g$ containing a neighborhood of the origin}; \\
        f: \mathcal{D} \to \mathbb{R}^n: \textrm{a given locally Lipshitz continuous nonlinear function that} \\
        \qquad\qquad\quad\;\;\; \textrm{satisfies $f(0) = 0$}; \\
        g: \mathcal{D} \to \mathbb{R}^{n \times m}: \textrm{a given locally Lipshitz continuous nonlinear function}, \\
  \end{cases}
  \end{equation*}
  and the performance measure
  \begin{equation}
    J(x_0, u(\cdot))
    = \int_0^\infty r(\phi(\tau; x_0, u),u(\phi(\tau; x_0, u))) \; d\tau,
    \label{eq:performance measure}
  \end{equation}
  where
  \begin{description}
    \item [{\;\;\small $\bullet$} $\phi(\tau; x_0, u):$]
    the state trajectory $x(\tau)$ at time $\tau \in \mathbb{R}_+$ generated by \eqref{eq:system} with
    the initial condition $x_0 \in \mathcal{D}$ and a given policy $u = \mu(x)$;
    \\[-10pt]
    \item [{\;\;\small $\bullet$} $r: \mathcal{D} \times \mathbb{R}^{m} \to \mathbb{R}_+:$]
        the given positive definite cost function
        on $\mathcal{D} \times \mathbb{R}^{m}$ defined as
        \[
            r(x,u) := Q(x)+u^TRu
        \]
        for a positive definite function $Q: \mathcal{D} \to \mathbb{R}_+$ on $\mathcal{D}$
        and a positive definite matrix $R \in \mathbb{R}^{m \times m}$.
  \end{description}
  Here, the notion of a policy $\mu(x)$ for the system \eqref{eq:system}
  is precisely defined as follows.

  \begin{definition}
  \label{def:policy}
    A function $\mu: \mathcal{D} \to \mathbb{R}^m$ is said to be
    a policy on a subset $\Omega \subseteq \mathcal{D}$
    if and only if $\mu$ is continuous on $\Omega$ and satisfies $\mu(0) = 0$.
  \end{definition}

  Note that
  the nonlinear dynamics \eqref{eq:system},
  which has the origin `0' as an equilibrium,
  can be regarded as
  the general description
  of the systems such as
  feedback linearizable systems \cite{Khalil2002}, strict feedback systems \cite{Khalil2002},
  bilinear systems \cite{Beard1995}, and
  many practical nonlinear systems \cite{Beard1995},
  all of which can be stabilized
  by a continuous feedback control $u = \mu(x)$
  for the equilibrium `$0$'.
  For the existence of the solution $\phi(t; x_0, \mu)$ of
  the nonlinear dynamics \eqref{eq:system} $\forall t \geq 0$,
  we assume that
  \begin{assumption}
  \label{assumption:Lipschitz}
    For any given policy $\mu(x)$, $f(x) + g(x)\mu(x)$ is locally Lipschitz continuous on the domain $\mathcal{D}$.
  \end{assumption}
  If the policy $\mu(x)$ is continuously differentiable on $\mathcal{D}$, \emph{i.e.}, $\mu \in \mathcal{C}^1(\mathcal{D})$,
  then it can be easily shown that $f(x) + g(x)\mu(x)$ is locally Lipschitz continuosu on $\mathcal{D}$,
  so Assumption \ref{assumption:Lipschitz} holds.
  In this paper, Assumption \ref{assumption:Lipschitz} suffices for the analysis, and
  we do not assume such a strict differentiability assumption on $\mu(x)$.
  Next,
  we precisely define a feasible trajectory and a stabilizing policy
  on a given subset $\Omega$ of $\mathcal{D}$.

  \begin{definition} [\textbf{Feasible trajectory}]
  \label{def:feasible trj}
  For a given policy $\mu(x)$,
  the state trajectory $\phi(t; x_0, \mu)$
  is said to be feasible on a subset $\Omega \subseteq \mathcal{D}$
  if and only if
  \begin{equation}
     x_0 \in \Omega \textrm{ implies } \phi(t; x_0, \mu) \in \mathcal{D} \;\; \textrm{for all } t \geq 0.
     \label{eq:well-define trj}
  \end{equation}
  \end{definition}

  \begin{definition} [\textbf{Stabilizing policy}]
  \label{def:stabilizing policy}
  A policy $\mu(x)$ is said to asymptotically stabilize the system $(f,g)$ on
  $\Omega \subseteq \mathcal{D}$ (or stabilizing on $\Omega$)
  if and only if
  \begin{enumerate} \itemsep-0.3em
  \item
  $\phi(t; x_0, \mu)$ exists $\forall x_0 \in \Omega$ and $\forall t \geq 0$;
  \item
  the equilibrium `0' of the resulting closed-loop system $\dot x = f(x) + g(x)\mu(x)$
  is stable;
  \item
  $\lim_{t \to \infty} \phi(t; x_0, \mu) = 0$ for all $x_0 \in \Omega$.
  \end{enumerate}
  \end{definition}

  For a given stabilizing policy $\mu(x)$,
  the region of attraction of the closed-loop system $\dot x = f(x)+g(x)\mu(x)$ is defined as
  \[
    R_A(\mu) :=
    \big \{
      x_0 \in \mathcal{D}: \phi(t; x_0, \mu) \to 0 \textrm{ as } t \to \infty
    \big \};
  \]
  Similarly, we define the value function $V^\mu(x_0)$ for $x_0 \in \mathcal{D}$,
  if it exists, as
  \begin{equation*}
    V^\mu(x_0) := J(x_0, u(\cdot))|_{u = \mu(x)}.
  \end{equation*}
  Since $Q(0) = 0$, $\mu(0) = 0$,
  and $\phi(t;x_0, \mu)|_{x_0 = 0} = 0$ for all $t \geq 0$,
  we have $V^\mu(0) = 0$.
  So, by the positive definiteness of $r(x,u)$ on $\mathcal{D} \times \mathbb{R}^m$,
  $V^\mu$ is always positive definite on its domain.
%\subsection{Refined concept of admissibility}
%\label{section:admissibility}
  Using Definitions
  \ref{def:policy}--\ref{def:stabilizing policy},
  the notion of an admissible policy
  given by Beard et al. \cite{Beard1997} for the existence of $V^\mu$
  can be re-defined in a refined, generalized manner as follows.

  \begin{definition} [\textbf{Admissible policy}]
  \label{def:admissibility}
    A policy $\mu(x)$
    is admissible on a subset $\Omega \subseteq \mathcal{D}$,
    denoted by $\mu \in \mathcal{A}(\Omega)$,
    if and only if
    \begin{enumerate}  \itemsep-0.3em
     \item
     $\mu(x)$ asymptotically stabilizes the system $(f,g)$ on $\Omega$;
     \item
     $\phi(t; x_0, \mu)$ is feasible on $\Omega$;
     \item
     $V^\mu(x_0) < \infty$, \; $\forall x_0 \in \Omega$.
    \end{enumerate}
  \end{definition}

    For the nonlinear dynamics \eqref{eq:system},
    we assume the existence of an admissible policy.
  \begin{assumption}
    There exist a policy $\mu(x)$ and a subset $\Omega \subseteq \mathcal{D}$ for the nonlinear system \eqref{eq:system}
    such that $\mu \in \mathcal{A}(\Omega)$.
  \end{assumption}
  %\begin{proposition}
  %  $\ushort \Omega \subseteq \Omega \Longrightarrow
  %  \mathcal{A}(\ushort \Omega) \subseteq \mathcal{A}(\Omega)$.
  %\end{proposition}

  Note that $\mu \in \mathcal{A}(\Omega)$
  implies that $\mu$ is stabilizing on $\Omega$,
  and thereby, $\Omega \subseteq R_A(\mu)$.
  Compared with \cite{Beard1997},
  the concept of admissibility in Definition \ref{def:admissibility} is
  refined and slightly generalized.
  First, it is defined on a subset $\Omega$ of $\mathcal{D}$,
  so contains the previous definition as a special case ``$\Omega = \mathcal{D}$'' \cite{Beard1997};
  second, we assume
  that $\phi(t; x_0, \mu)$ $(t \geq 0)$ is feasible on $\Omega$,
  so $\phi(t; x_0, \mu)$ remains in the domain $\mathcal{D}$
  for all $t \geq 0$ and all $x_0 \in \Omega$.
  This condition is guaranteed
  if $\Omega \subseteq R_A(\mu)$ is satisfied and
  $\mathcal{D}$ contains either
  $R_A(\mu)$ or its invariant subset containing $\Omega$.
  However, such a domain $\mathcal{D}$ is hard to determine (or even impossible)
  unless $\mathcal{D} = \mathbb{R}^n$
  since both $R_A(\mu)$ and its invariant subset depend on the policy $\mu$,
  and hence so does the determination of $\mathcal{D}$.
  Therefore,
  instead of imposing such an unrealistic assumption on $\mathcal{D}$,
  we introduce the concept of invariant admissibility as follows.

  \begin{definition} [\textbf{Invariantly admissible policy}]
  \label{def:invariant admissibility}
    A policy $\mu(x)$
    is invariantly admissible on a subset $\bar \Omega \subseteq \mathcal{D}$
    containing a neighborhood of the origin,
    denoted by $\mu \in \mathcal{A}_\mathcal{I}(\bar \Omega)$,
    if and only if
    \begin{enumerate} \itemsep-0.3em
     \item
     $\mu \in \mathcal{A}(\bar \Omega)$;
     \item
     $\bar \Omega$ is invariant under the policy $\mu$, \emph{i.e.},
     \begin{equation}
      \textrm{if } x_0 \in \bar \Omega, \textrm{ then } \phi(t;x_0, \mu) \in  \bar \Omega \,\;
      \textrm{ for all } t \geq 0.
      \label{eq:invariance trj}
     \end{equation}
    \end{enumerate}
  \end{definition}

  \begin{proposition}
  \label{prop:A to AI}
    $\mu \in \mathcal{A}_\mathcal{I}(\bar \Omega)$ implies
    $\mu \in \mathcal{A}(\bar \Omega)$.
  \end{proposition}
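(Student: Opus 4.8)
The plan is to observe that the implication is immediate from the definition of invariant admissibility, so the "proof" consists essentially of unpacking Definition~\ref{def:invariant admissibility}. By that definition, a policy $\mu$ belonging to $\mathcal{A}_\mathcal{I}(\bar\Omega)$ satisfies, by its very first requirement, $\mu \in \mathcal{A}(\bar\Omega)$; the second requirement merely superimposes the invariance condition \eqref{eq:invariance trj} on $\bar\Omega$. Hence, given $\mu \in \mathcal{A}_\mathcal{I}(\bar\Omega)$, one reads off item~1 of Definition~\ref{def:invariant admissibility} to conclude $\mu \in \mathcal{A}(\bar\Omega)$, and there is nothing further to establish.

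The only points I would check for cleanliness are that the subset appearing in both memberships is literally the same set $\bar\Omega$, and that $\bar\Omega \subseteq \mathcal{D}$ contains a neighborhood of the origin, so that $\mathcal{A}(\bar\Omega)$ is well posed under the standing terminology on subsets of $\mathbb{R}^n$; both facts are already built into the hypothesis $\mu \in \mathcal{A}_\mathcal{I}(\bar\Omega)$ (recall the over-bar convention that $\bar\Omega$ is an invariant subset of $\mathcal{D}$ containing a neighborhood of the origin).

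I expect no real obstacle here: the proposition is a consistency remark recording that invariant admissibility is a (strictly) stronger notion than admissibility, i.e.\ $\mathcal{A}_\mathcal{I}(\bar\Omega) \subseteq \mathcal{A}(\bar\Omega)$, and it fixes the interpretation used throughout the rest of the paper. The substantive work — showing that the proposed update rule for the invariantly admissible region actually yields such a $\bar\Omega$ for the current and next policies, and establishing the monotonic decrease and uniform convergence of $\{V^{\mu_i}\}$ — is deferred to the later theorems and lemmas, not to this proposition.
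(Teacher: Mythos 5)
Your proposal is correct and matches the paper's (implicit) reasoning: the paper states Proposition~\ref{prop:A to AI} without proof precisely because membership in $\mathcal{A}(\bar\Omega)$ is the first clause of Definition~\ref{def:invariant admissibility}, which is exactly the observation you make. Nothing further is needed.
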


  Note that
  the invariance condition \eqref{eq:invariance trj}
  in Definition \ref{def:invariant admissibility}
  replaces
  the feasibility condition \eqref{eq:well-define trj}
  in Definition \ref{def:feasible trj}.
  By Theorem 3.3 in \cite{Khalil2002} and
  Assumption \ref{assumption:Lipschitz},
  the invariance \eqref{eq:invariance trj}
  also guarantees the existence of
  the unique solution $\phi(t; x_0, \mu)$
  for all $x_0 \in \bar \Omega$
  and all $t \geq 0$
  if $\bar \Omega$ is compact.
  Related to these observations and invariant admissibility,
  we look inside
  a variant of the (local) Lyapunov's theorem for asymptotic stability
  (Theorem 4.1 in \cite{Khalil2002})
  on a compact set $\bar \Omega_c$ defined as
  \begin{equation}
  \bar \Omega_c = \{x \in \mathbb{R}^n: V(x) \leq c\},
  \label{eq:Omega_c}
  \end{equation}
  where $V: \Omega \to \mathbb{R}_+$ is a Lyapunov function for an asymptotically stable closed-loop system $\dot x = f(x) + g(x)\mu(x)$
  on a domain $\Omega \subseteq \mathcal{D}$,
  and $c$ is a constant determined in such a way that
  $\bar \Omega_c$ is contained by $\Omega$, \emph{i.e.},
  $\bar \Omega_c \subseteq \Omega$.
  For the proof, see Theorem 4.1 in \cite{Khalil2002} and its proof.

  \begin{theorem} \label{thm:Lyapunov thm}
    For a subset $\Omega \subseteq \mathcal{D}$,
    if there exists a function $V: \Omega \to \mathbb{R}_+$
    such that $V$ is positive definite on $\Omega$,
    $V \in \mathcal{C}^1(\Omega)$, and
    $\dot V \equiv ( \partial V / \partial x )^T (f+g\mu)$ is negative definite on $\Omega$,
    then,
    \begin{enumerate}
    %\item
    %$\phi(t; x_0, \mu)$ is well-defined for all $t \geq 0$ and all $x_0 \in \bar %\Omega_c$;
    \item
    $\mu(x)$ asymptotically stabilizes the system $(f,g)$ on $\bar \Omega_c$;
    \item
    $x_0 \in \bar \Omega_c$
    implies $\phi(t; x_0, \mu) \in \bar \Omega_c$ $\forall t \geq 0$.
    \end{enumerate}
  \end{theorem}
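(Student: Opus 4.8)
The plan is to adapt the classical local Lyapunov argument (the proof of Theorem~4.1 in \cite{Khalil2002}) to the sublevel set $\bar\Omega_c$; the only genuinely new point is to handle the coupling between the invariance of $\bar\Omega_c$ and the forward completeness of the closed-loop trajectories, since $\dot{V}$ is known to be negative definite only \emph{on} $\Omega$.

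I would start from the setup that $\bar\Omega_c$ is, by the over-bar convention, a compact subset of $\Omega$ (this is what makes the choice of $c$ meaningful: $V$ is positive definite and continuous, and by the standing terminology the relevant sublevel set is the component attached to the origin). The first step is to prove item~2 together with the existence claim hidden in item~1. Fix $x_0\in\bar\Omega_c$ and let $[0,T_{\max})$ be the maximal interval on which the unique solution $\phi(t;x_0,\mu)$ of \eqref{eq:system} exists in $\mathcal{D}$; such a unique local solution exists by Assumption~\ref{assumption:Lipschitz} (Theorem~3.3 in \cite{Khalil2002}). I claim $\phi(t;x_0,\mu)\in\bar\Omega_c$ for all $t\in[0,T_{\max})$. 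If not, let $\tau\in[0,T_{\max})$ be the first instant with $V(\phi(\tau;x_0,\mu))=c$; for $t\in[0,\tau]$ the trajectory lies in $\bar\Omega_c\subseteq\Omega$, so $t\mapsto V(\phi(t;x_0,\mu))$ is $\mathcal{C}^1$ with $\frac{d}{dt}V(\phi(t;x_0,\mu))=\dot{V}(\phi(t;x_0,\mu))\le 0$, whence $c=V(\phi(\tau;x_0,\mu))\le V(x_0)\le c$; this forces $V(\phi(t;x_0,\mu))\equiv c$ and thus $\dot{V}(\phi(t;x_0,\mu))\equiv 0$ on $[0,\tau]$, so by negative definiteness $\phi(t;x_0,\mu)\equiv 0$, contradicting $V\equiv c>0$. (If $V(x_0)=c$, then $x_0\neq 0$, so $\dot{V}(x_0)<0$ and the trajectory enters $\{V<c\}$ immediately; applying the same argument from that point gives the contradiction.) Hence the trajectory never leaves the compact set $\bar\Omega_c\subseteq\mathcal{D}$, so by the escape-to-the-boundary alternative (Theorem~3.3 in \cite{Khalil2002}) we get $T_{\max}=\infty$. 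This proves item~2 and condition~1 of Definition~\ref{def:stabilizing policy}.

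The remaining two conditions of Definition~\ref{def:stabilizing policy} follow as in \cite{Khalil2002}. For stability, given small $\epsilon>0$ with $\bar B_0(\epsilon)\subseteq\bar\Omega_c$, set $\beta:=\min_{\|x\|=\epsilon}V(x)>0$ (positive by positive definiteness of $V$; note $\beta\le c$), pick $c'\in(0,\beta)$, and choose $\delta>0$ with $\bar B_0(\delta)\subseteq\{x\in\bar\Omega_c:V(x)\le c'\}$; the last set is invariant by the argument above with $c'$ in place of $c$, and it lies inside $\bar B_0(\epsilon)$ because $V\ge\beta>c'$ on $\{\|x\|=\epsilon\}$ and $\bar\Omega_c$ is connected to the origin. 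Thus $\|x_0\|\le\delta$ implies $\|\phi(t;x_0,\mu)\|\le\epsilon$ for all $t\ge 0$. For attractivity, fix $x_0\in\bar\Omega_c$; then $t\mapsto V(\phi(t;x_0,\mu))$ is non-increasing and bounded below by $0$, hence converges to some $c^\star\ge 0$. If $c^\star>0$, the trajectory stays in the compact set $K:=\{x\in\bar\Omega_c:V(x)\ge c^\star\}$, which excludes the origin, so $\dot{V}\le-\gamma$ on $K$ for $\gamma:=-\max_{x\in K}\dot{V}(x)>0$, giving $V(\phi(t;x_0,\mu))\le V(x_0)-\gamma t\to-\infty$, a contradiction; hence $c^\star=0$. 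A routine compactness argument (a subsequential limit of $\phi(t_k;x_0,\mu)$ with $\|\phi(t_k;x_0,\mu)\|$ bounded away from $0$ would have positive $V$-value, contradicting $V(\phi(t_k;x_0,\mu))\to 0$) then yields $\phi(t;x_0,\mu)\to 0$ as $t\to\infty$, completing item~1.

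I expect the main obstacle to be the first step: making the invariance/forward-completeness argument rigorous without circular reasoning — the inequality $\dot{V}(\phi(t;x_0,\mu))\le 0$ may be invoked only while the trajectory is already known to lie in $\Omega$, which is exactly what the first-exit-time construction secures — whereas the stability and attractivity parts are essentially transcriptions of the proof of Theorem~4.1 in \cite{Khalil2002}.
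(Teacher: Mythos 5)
Your proposal is correct and follows essentially the same route as the paper, which does not write out a proof but simply defers to Theorem~4.1 in \cite{Khalil2002} and its proof; your argument is a faithful adaptation of that classical proof (first-exit-time invariance of the sublevel set, forward completeness via the escape-to-the-boundary alternative, then the standard stability and attractivity arguments) to the compact set $\bar\Omega_c$. The one point you rightly emphasize beyond the bare citation --- that $\dot V\le 0$ may only be invoked while the trajectory is already known to lie in $\Omega$, which the first-exit construction secures --- is precisely the subtlety the paper is concerned with in Remark~\ref{remark:domain of Lyapunov function}.
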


  Theorem \ref{thm:Lyapunov thm}
  provides
  an asymptotically stable invariant region $\bar \Omega_c$,
  which is a compact invariant subset of $R_A(\mu)$.
  On this invariant region $\bar \Omega_c$,
  existence and feasibility of
  the unique solution $\phi(t;x_0,\mu)$ are guaranteed.
  Therefore,
  if $V^\mu(x) < \infty$ holds for all $x \in \bar \Omega_c$,
  then the conditions in Theorem \ref{thm:Lyapunov thm}
  imply $\mu \in \mathcal{A}_\mathcal{I}(\bar \Omega_c)$.

  \begin{remark}
  \label{remark:domain of Lyapunov function}
  $\Omega \setminus \bar \Omega_c$ may not be
  a stabilizing region
  since $\Omega \setminus \bar \Omega_c \subseteq R_A(\mu)$ is not guaranteed.
  So, $\phi(t; x_0, \mu)$ for some $x_0 \in \Omega \setminus \bar \Omega_c$
  may leave the domain $\mathcal{D}$ and even may diverge to $\infty$.
  In this situation,
  $\dot V(x) < 0$ and even the existence of $\phi(t; x_0, \mu)$ ($t \geq 0$)
  are not guaranteed (see Section 8.2 in \cite{Khalil2002} for more discussions).
  \end{remark}

  If $V^{\mu} \in \mathcal{C}^1(\Omega)$, then
  %on $ \Omega$,
  it satisfies the Lyapunov equation
  for the system \eqref{eq:system}:
 \begin{align}
    \nabla^T V^{\mu}(x) \cdot \big ( f(x) + g(x) \mu(x) \big ) = -r(x, \mu(x)), \;\; \forall x \in  \Omega,
   \label{eq:Lyapunov equation for mu}
 \end{align}
 which is
 the infinitesimal version of \eqref{eq:performance measure}
 and implies $\dot V^\mu(x) = - r(x,\mu) < 0$
 along the trajectory $\phi(t; x_0, \mu)$.
 In this case, since $V^{\mu}$ is positive definite on its domain,
 \eqref{eq:Lyapunov equation for mu} guarantees
 that $V^{\mu}$ is a Lyapunov function for the closed-loop system $\dot x = f + g\mu$ satisfying the conditions in Theorem \ref{thm:Lyapunov thm}.
 This provides the following converse lemma of Proposition \ref{prop:A to AI}
 on a compact subset
 $\bar \Omega_c^\mu$ of $ \Omega$ defined similarly to $\bar \Omega_c$ by
  \begin{equation}
  \label{eq:Omega_c^mu}
    \bar \Omega_c^\mu := \{x \in \mathbb{R}^n: V^\mu(x) \leq c\}, %, \textrm{ $\nabla V^\mu(x)$ exists.}\}
  \end{equation}
  where $c > 0$ is chosen such that $\bar \Omega_c^\mu$ is contained by
  the domain $\Omega \subseteq \mathcal{D}$ of $V^\mu$,
  \emph{i.e.}, $\bar \Omega_c^\mu \subseteq  \Omega \subseteq \mathcal{D}$.
  The proof can be easily done by
  applying Theorem \ref{thm:Lyapunov thm} with the Lyapunov function $V = V^\mu$
  satisfying \eqref{eq:Lyapunov equation for mu} $\forall x \in \Omega$.

  \begin{lemma}
  \label{lemma:admissibility to inv. admissibility}
    If $\mu \in \mathcal{A}(\Omega)$ and $V^\mu \in \mathcal{C}^1(\Omega)$,
    then $\mu \in \mathcal{A}_\mathcal{I}(\bar \Omega_c^\mu)$.
  \end{lemma}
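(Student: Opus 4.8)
The plan is to read Lemma~\ref{lemma:admissibility to inv. admissibility} as a direct application of the Lyapunov-type Theorem~\ref{thm:Lyapunov thm} with the Lyapunov function chosen to be the value function itself, $V = V^\mu$. Accordingly, I would first verify that $V^\mu$ meets all three hypotheses of Theorem~\ref{thm:Lyapunov thm} on the domain $\Omega$: that $V^\mu$ is positive definite on $\Omega$, that $V^\mu \in \mathcal{C}^1(\Omega)$, and that $\dot V^\mu \equiv \nabla^T V^\mu \cdot (f + g\mu)$ is negative definite on $\Omega$. The second is exactly the hypothesis of the lemma, and the first has already been recorded above: $V^\mu(0) = 0$, $V^\mu$ is continuous (being $\mathcal{C}^1$), and the positive definiteness of $r$ on $\mathcal{D}\times\mathbb{R}^m$ combined with $\mu \in \mathcal{A}(\Omega)$ forces $V^\mu(x_0) > 0$ for every $x_0 \in \Omega\setminus\{0\}$; it also makes $\bar\Omega_c^\mu$ a neighborhood of the origin, as required by Definition~\ref{def:invariant admissibility}.

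The only hypothesis needing an argument is the negative definiteness of $\dot V^\mu$. Since $\mu \in \mathcal{A}(\Omega)$ and $V^\mu \in \mathcal{C}^1(\Omega)$, the value function satisfies the Lyapunov equation~\eqref{eq:Lyapunov equation for mu} on $\Omega$, so $\dot V^\mu(x) = \nabla^T V^\mu(x)\,\big(f(x)+g(x)\mu(x)\big) = -r(x,\mu(x)) = -\big(Q(x)+\mu(x)^T R\,\mu(x)\big)$ for all $x \in \Omega$. Because $Q$ is positive definite on $\mathcal{D}$, $R$ is positive definite, and $\mu(0)=0$, this right-hand side vanishes at the origin and is strictly negative on $\Omega\setminus\{0\}$; hence $\dot V^\mu$ is negative definite on $\Omega$. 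With all three hypotheses verified, Theorem~\ref{thm:Lyapunov thm} (applied with $V=V^\mu$, whose sublevel set~\eqref{eq:Omega_c} is precisely $\bar\Omega_c^\mu$ of~\eqref{eq:Omega_c^mu}) yields simultaneously that (a) $\mu$ asymptotically stabilizes $(f,g)$ on $\bar\Omega_c^\mu$, and (b) $x_0 \in \bar\Omega_c^\mu$ implies $\phi(t;x_0,\mu) \in \bar\Omega_c^\mu$ for all $t\ge 0$.

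It then remains to assemble the conclusion. Item (b) is exactly the invariance condition~\eqref{eq:invariance trj} of Definition~\ref{def:invariant admissibility}, so I only need $\mu \in \mathcal{A}(\bar\Omega_c^\mu)$ in the sense of Definition~\ref{def:admissibility}. Its first requirement is item (a). Feasibility follows from (b): since $c$ is chosen so that $\bar\Omega_c^\mu \subseteq \Omega \subseteq \mathcal{D}$ and the closed-loop trajectory never leaves $\bar\Omega_c^\mu$, it never leaves $\mathcal{D}$, so $\phi(t;x_0,\mu)$ is feasible on $\bar\Omega_c^\mu$. Finiteness of $V^\mu$ is immediate, since $x_0 \in \bar\Omega_c^\mu$ gives $V^\mu(x_0) \le c < \infty$. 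Hence $\mu \in \mathcal{A}(\bar\Omega_c^\mu)$, and together with the invariance of $\bar\Omega_c^\mu$ under $\mu$ this is precisely $\mu \in \mathcal{A}_\mathcal{I}(\bar\Omega_c^\mu)$, i.e.\ the converse of Proposition~\ref{prop:A to AI} on $\bar\Omega_c^\mu$.

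I do not expect a genuine obstacle here; the argument is largely bookkeeping against the definitions, and the real work has already been done inside Theorem~\ref{thm:Lyapunov thm}. The two points meriting care are: first, that the Lyapunov equation~\eqref{eq:Lyapunov equation for mu} is legitimately valid \emph{pointwise on all of} $\Omega$ — this is the content of differentiating $t\mapsto V^\mu(\phi(t;x_0,\mu))$ at $t=0$ under $V^\mu \in \mathcal{C}^1(\Omega)$, and is exactly the fact asserted in the text preceding the lemma; and second, that the constant $c$ is chosen so that $\bar\Omega_c^\mu$ is a compact subset of $\Omega$, as is built into~\eqref{eq:Omega_c^mu} and is what supplies the compactness used in the invariance part of Theorem~\ref{thm:Lyapunov thm}.
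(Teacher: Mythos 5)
Your proof is correct and follows essentially the same route as the paper, which likewise establishes the lemma by applying Theorem~\ref{thm:Lyapunov thm} with the Lyapunov function $V = V^\mu$ satisfying the Lyapunov equation~\eqref{eq:Lyapunov equation for mu} on $\Omega$. Your write-up merely spells out in more detail the bookkeeping (positive definiteness of $V^\mu$, negative definiteness of $\dot V^\mu$ via $r$, and the assembly of feasibility and finiteness from invariance) that the paper leaves implicit.
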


 \begin{remark}
 The existence of the unique $\mathcal{C}^1$ value function $V^\mu$
 on a subset of $R_A(\mu)$ is guaranteed under certain conditions,
 for example, if:
 \begin{enumerate}
   \item
   $Q(x)$ has second partial derivatives that are continuous,
   and all the real parts of the eigenvalues of $\nabla (f+g\mu)|_{x=0}$ are negative
   \cite{Knobloch1974} (see also Theorem 3 in \cite{Kaslik2005});
   \item
   the functions $\mu(x)$ and $Q(x)$ are continuously differentiable, and $\mu$ is admissible
   on the domain (Lemma 3.1.6 in \cite{Beard1995});
   \item
   the functions $f(x)$, $g(x)$, and $Q(x)$ are all smooth on the domain,
   and all the real parts of the eigenvalues of $\nabla f(x)|_{x=0}$ are negative
   \cite{Murray2002,Murray2003}.
 \end{enumerate}
 \end{remark}

 The next lemma is
 a refined, generalized version of Lemma 3.1.9 in \cite{Beard1995},
 and states that
 the admissibility is preserved
 in a feasible stabilizing region.

 \begin{lemma}
    \label{lemma:admissibility on Upsilon}
    Assume $\mu \in \mathcal{A}(\Omega)$
    for a subset $\Omega \subseteq \mathcal{D}$
    containing a neighborhood of the origin.
    Let $\Upsilon \subseteq \mathcal{D}$ be a feasible subset of $R_A(\mu)$.
    Then, $V^\mu$ is defined for all $x \in \Upsilon$, and $\mu$ is admissible on $\Upsilon$, i.e., $\mu \in \mathcal{A}(\Upsilon)$.
 \end{lemma}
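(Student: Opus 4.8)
The plan is to verify, one by one, the three defining conditions of $\mu \in \mathcal{A}(\Upsilon)$ in Definition \ref{def:admissibility}, and to extract the claimed finiteness of $V^\mu$ on $\Upsilon$ as a byproduct of condition 3. The two ``easy'' conditions are essentially built into the hypotheses, while the finiteness of $V^\mu$ on $\Upsilon$ is the substantive point and will be reduced to the already-known finiteness on $\Omega$ via the semigroup property of the autonomous flow.

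First I would dispose of conditions 1 and 2. Feasibility of $\phi(\cdot; x_0, \mu)$ on $\Upsilon$ (condition 2 of Definition \ref{def:admissibility}, i.e.\ Definition \ref{def:feasible trj}) is precisely the assumption that $\Upsilon$ is a \emph{feasible} subset of $R_A(\mu)$; in particular $\phi(t; x_0, \mu)$ exists for all $x_0 \in \Upsilon$ and all $t \geq 0$. For asymptotic stability on $\Upsilon$ (condition 1, i.e.\ Definition \ref{def:stabilizing policy}): attractivity, $\phi(t; x_0, \mu) \to 0$ for every $x_0 \in \Upsilon$, is exactly the inclusion $\Upsilon \subseteq R_A(\mu)$; and Lyapunov stability of the equilibrium $0$ is an intrinsic \emph{local} property of the closed-loop system $\dot x = f + g\mu$ near $0$, which we already have since $\mu \in \mathcal{A}(\Omega)$ makes $0$ stable and $\Omega$ contains a neighborhood of $0$. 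Hence stability need not be re-established on $\Upsilon$, and conditions 1 and 2 hold.

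The heart of the argument is condition 3: $V^\mu(x_0) < \infty$ for every $x_0 \in \Upsilon$. Fix $x_0 \in \Upsilon$. Since $\phi(t; x_0, \mu) \to 0$ as $t \to \infty$ and $\Omega$ contains a neighborhood of $0$, there is a finite $T \geq 0$ with $x_1 := \phi(T; x_0, \mu) \in \Omega$. By Assumption \ref{assumption:Lipschitz} the closed-loop vector field is locally Lipschitz on $\mathcal{D}$, so solutions are unique and the flow is a semigroup; thus $\phi(\tau; x_1, \mu) = \phi(\tau + T; x_0, \mu)$ for all $\tau \geq 0$, and a change of variable in \eqref{eq:performance measure} gives
\[
  V^\mu(x_0) = \int_0^T r\big(\phi(\tau; x_0, \mu), \mu(\phi(\tau; x_0, \mu))\big)\, d\tau \;+\; V^\mu(x_1).
\]
The second term is finite because $x_1 \in \Omega$ and $\mu \in \mathcal{A}(\Omega)$. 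The first term is an integral over the compact interval $[0, T]$ of a nonnegative function that is the composition of the continuous cost $r$ with the continuous arc $\tau \mapsto \big(\phi(\tau; x_0, \mu), \mu(\phi(\tau; x_0, \mu))\big)$ — feasibility keeps $\phi(\tau; x_0, \mu)$ in $\mathcal{D}$, where $Q$ is continuous — hence it is bounded and finite. Therefore $V^\mu(x_0) < \infty$, which in particular shows $V^\mu$ is well defined on all of $\Upsilon$, and Definition \ref{def:admissibility} yields $\mu \in \mathcal{A}(\Upsilon)$.

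I expect the only genuine friction to be in the transient term $\int_0^T r\, d\tau$: the arc $\phi([0,T]; x_0, \mu)$ need not remain inside $\Omega$, so bounding the control-penalty part $\mu^T R \mu$ of $r$ along it relies on $\mu$ being continuous — or at least locally bounded and measurable — on $\mathcal{D}$, not merely on $\Omega$; this regularity of $\mu$ on $\mathcal{D}$ (consistent with Assumption \ref{assumption:Lipschitz} and with policies being defined on all of $\mathcal{D}$) is the point one must pin down. Everything else — the semigroup identity, the inheritance of Lyapunov stability of $0$, and the change of variables — is routine once uniqueness of solutions is invoked.
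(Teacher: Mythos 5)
Your proof is correct and follows essentially the same route as the paper's: wait for a finite time $T$ until the trajectory enters $\Omega$ (the paper uses a neighborhood $N_0\subseteq\Omega$ of the origin), split the cost integral at $T$ via the semigroup property, and bound the tail by the known finiteness of $V^\mu$ on $\Omega$. You are in fact somewhat more careful than the paper, which does not explicitly verify conditions 1 and 2 of Definition \ref{def:admissibility} nor the finiteness of the transient integral $\int_0^T r\,d\tau$ (the regularity of $\mu$ on $\mathcal{D}\setminus\Omega$ that you flag is a real, if minor, gap in the original argument), and whose final displayed inequality $V^\mu(x_0) < V^\mu(\phi(T;x_0,\mu))$ is a typo for the intended conclusion $V^\mu(x_0)<\infty$.
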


 \begin{proof}
    Let $N_0$ be the neighborhood of the origin
    contained by $\Omega$.
    Then, $\mu \in \mathcal{A}(\Omega)$ implies
    $\mu \in \mathcal{A}(N_0)$,
    so $V^{\mu}(x) < \infty$ for all $x \in N_0$.
    Since $\Upsilon \subseteq \mathcal{D}$
    is a feasible subset of $R_A(\mu)$, we have
    ``$
        x_0 \in \Upsilon \textrm{ implies } \phi(t;x_0, \mu)  \in \mathcal{D}
    $ for all $t \geq 0$''
    and ``$\lim_{t \to \infty} \phi(t;x_0, \mu) =0$ for all $x_0 \in \Upsilon$''.
    Here, the latter implies that
    there is a time $T > 0$ such that ``$\phi(T;x_0, \mu) \in N_0$''.
    %Since the system $\dot x = f + g\mu$ is purely autonomous,
    %we have $\phi(t+T;x_0, \mu) = \phi(t;\phi(T;x_0, \mu), \mu)$.
    Therefore, we have $\mu \in \mathcal{A}(\Upsilon)$
    since from \eqref{eq:performance measure} and the definition of $V^\mu$,
    \begin{align*}
        &V^{\mu}(x_0)  \\
         &= \int_0^T r(\phi(\tau; x_0, \mu),\mu(\phi(\tau; x_0, \mu))) \; d\tau
         + \underbrace{\int_T^\infty r(\phi(\tau;x_0, \mu),\mu(\phi(\tau;x_0, \mu))}_{=V^{\mu} (\phi(T; \,x_0, \mu))} \; d\tau\\
         &< \;\; V^{\mu} (\phi(T; \,x_0, \mu)) < \infty
    \end{align*}
    holds for all $x_0 \in \Upsilon$.
  \end{proof}

  Define the Hamiltonian $H:\mathbb{R}^n \times \mathbb{R}^m \times \mathbb{R}^n \to \mathbb{R}$
  for the nonlinear optimal control problem \eqref{eq:system}--\eqref{eq:performance measure} as
  \[
     H(x,u,p) := r(x,u) + p^T (f(x) + g(x)u).
  \]
  Then, the Lyapunov equation \eqref{eq:Lyapunov equation for mu}
  can be represented as $H(x,\mu,\nabla V^\mu) = 0$,
  and minimizing $H(x,\mu,\nabla V^*)$ among all admissible policies $\mu$
  yields the optimal policy $\mu^*(x)$ below:
  \begin{align}
    \mu^{*}(x)  = -\frac{1}{2}R^{-1}g^{T}(x)\nabla V^{*}(x),
    \label{eq:optimal policy u*}
  \end{align}
  where $V^{*}$ is
  the optimal value function defined
  as $V^* := V^{\mu^*}$.
  Furthermore, substituting \eqref{eq:optimal policy u*}
  into \eqref{eq:Lyapunov equation for mu} and rearranging the equation yields the well-known HJB equation:
  \begin{align}
    0 = Q(x) + \nabla {V^*}^Tf(x)
       - \frac{1}{4} \nabla {V^*}^T g(x) R^{-1} g^T(x) \nabla V^*.
       \label{eq:HJB}
 \end{align}
 For the optimal solution $V^*$, we assume throughout the paper that
 \begin{assumption}
    \label{assumption:HJB}
    $V^*$ is the unique $\mathcal{C}^1$-positive definite solution of the HJB equation \eqref{eq:HJB}
    on a subset $\Omega^* \subseteq \mathcal{D}$.
 \end{assumption}

\section{Policy iteration with admissible region update: invariantly admissible PI}
\label{section:invariant admissibility of PI}

 In this section,
 we focus on and discuss the invariant admissibility
 of the SAM \cite{Beard1995,Beard1997}.
 Then, the advanced algorithm, called invariantly admissible PI in this paper, is proposed
 which determines not only the value function and the next policy but its invariant admissible region
 at each iteration.

 In Lemma 5.2.4 in \cite{Beard1995},
 it was stated that the policies $\mu_i$'s
 generated by the SAM \cite{Beard1995,Beard1997}
 with an initial admissible policy $\mu_0 \in \mathcal{A}(\Omega)$
 are all admissible on $\Omega$.
 % In the admissibility proof of \cite[Lemma 5.2.4]{Beard1995},
 In the proof of the lemma,
 however,
 \begin{enumerate}
 \item [(1)]
 Lyapunov's theorem (Theorem 3.13 in \cite{Beard1995}) was applied
 under the implicit assumption that
 the domain $\Omega \subseteq \mathcal{D}$ of
 the Lyapunov function $V^{\mu_i}:\Omega \to \mathbb{R}_+$
 for the $i$-th admissible policy $\mu_i$
 is a subset of $R_A(\mu_i)$;
 \end{enumerate}
 the assumption is not true in general
 as mentioned in Remark \ref{remark:domain of Lyapunov function}.
 Moreover,
 \begin{enumerate}
 \item [(2)]
 the domain $\mathcal{D}$,
 which was equal to $\Omega$ in \cite{Beard1995,Beard1997},
 was arbitrarily given, not as an invariant estimate of $R_A(\mu_i)$,
 so that the trajectory $\phi(t; x_0, \mu_i)$ starting in $\mathcal{D}$
 may escape the domain $\mathcal{D}$ and may not be feasible.
 %as pointed out in Section \ref{section:admissibility}.
 \end{enumerate}

% Define block styles
\tikzstyle{io} = [trapezium,trapezium left angle=75,trapezium right angle=-75,minimum height=0.6cm, draw, fill=white,
text width=20em, inner sep=5pt]
\tikzstyle{decision} = [diamond, shape aspect=4, draw, fill=white!20,
text width=13em, text badly centered, inner sep=0pt]
\tikzstyle{block} = [rectangle, draw, fill=white!20,
text width=27em, minimum height=4em, inner sep= 5pt, -latex']
\tikzstyle{line} = [draw, -latex']
%\tikzstyle{cloud} = [draw, ellipse,fill=blue!20, node distance=3cm, minimum height=2em]
\tikzstyle{cloud} = [draw, ellipse, fill=blue!20, minimum height=1em]

\begin{figure}

    \centering
    \begin{tikzpicture} [node distance = 2cm, auto, execute at begin node=\normalsize]
    %\small
    % Place nodes
    \node [io] (init)
        {$\,$\\
         \textbf{Step 1: Initialization}\\[3pt]
         Set $i = 0$ and $\varepsilon > 0$ be a small constant.
         Let $\mu_0$ be a given policy that is admissible on a compact subset $\Omega_0 \subseteq \mathcal{D}$, \emph{i.e.},
         $\mu_0 \in \mathcal{A}(\Omega_0)$.\\[-10pt]
         $\,$
        };

    \node [cloud, left of=init, node distance=6.5cm] (start) {Start};

    \node [block, below of=init, node distance = 3.4cm] (policy evaluation)
        {
        $\,$\\
        \textbf{Step 2: Policy Evaluation}\\[3pt]
         Find the value function $V^{\mu_i} \in \mathcal{C}^1(\Omega_i)$ for $\mu_i \in \mathcal{A}(\Omega_i)$ such that
         \begin{align}
            (\nabla V^{\mu_i}(x))^T \big ( f(x) + g(x) \mu_i(x) \big ) = -r(x, \mu_i(x)),  \forall x \in \Omega_i.
            \label{eq:Policy Evaluation}
            \\[-10pt] \notag
         \end{align}
         };

    \node [block, below of=policy evaluation, node distance = 3.7cm] (policy improvement)
        {
        $\,$\\
        \textbf{Step 3: Policy Improvement}\\[3pt]
         Fine the next policy $\mu_{i+1}: \mathcal{D} \to \mathbb{R}^n$ whose restriction on $\Omega_i$ satisfies $\forall x \in \Omega_i$,
         \\[-15pt]
         \begin{align}
         \mu_{i+1}(x) = - \frac{1}{2}R^{-1} \, g^T(x) \nabla V^{\mu_{i}}(x).
         \label{eq:Policy Improvement}
         \\[-12pt] \notag
         \end{align}
         };

    \node [block, below of=policy improvement, node distance = 3.7cm] (region update)
        {
        $\,$\\
        \textbf{Step 4: Invariant Admissible Region Update}\\[3pt]
         Find the next compact region $\Omega_{i+1} \subseteq \mathcal{D}$ such that
         \begin{align}
         \mu_{i} \in \mathcal{A}_\mathcal{I}(\Omega_{i+1})
         \textrm{ and }
         \mu_{i+1} \in \mathcal{A}_\mathcal{I}( \Omega_{i+1}).
         \notag
         \\[-10pt] \notag
         \end{align}
         };

    \node [decision, below of=region update, node distance = 3.4cm] (converge)
        {\textbf{Step 5: Convergence}\\[3pt]
        $
         \textrm{Is } \sup_{x \in \Omega_{i+1}} \|\mu_{i+1} - \mu_i\| < \varepsilon?
        $
        };

    \node [left of=policy improvement, node distance=4cm]
        (again) {};

    \node [cloud, below of=converge, node distance=2.5cm] (end) {\;\;End\;\;};

    % Draw edges
    \path [line] (start) -- (init);
    \path [line] (init) -- (policy evaluation);
    \path [line] (policy evaluation) -- (policy improvement);
    \path [line] (policy improvement) -- (region update);
    \path [line] (region update) -- (converge);
    \path [line] (converge) -| node[below, xshift=1.5cm]{no}
                                ([xshift=-1.5cm, yshift=-0.5cm]policy improvement.south west)
                            |-  (policy evaluation);
    \path [line] (converge) -| node[above, xshift=1.5cm]{$i \leftarrow i+1$}
                                ([xshift=-1.5cm, yshift=-0.5cm]policy improvement.south west)
                            |-  (policy evaluation);
    \path [line] (converge) -- node[right]{yes}(end);
    \end{tikzpicture}
    \caption{The proposed invariantly admissible PI algorithm}
    \label{fig:modified PI}
\end{figure}
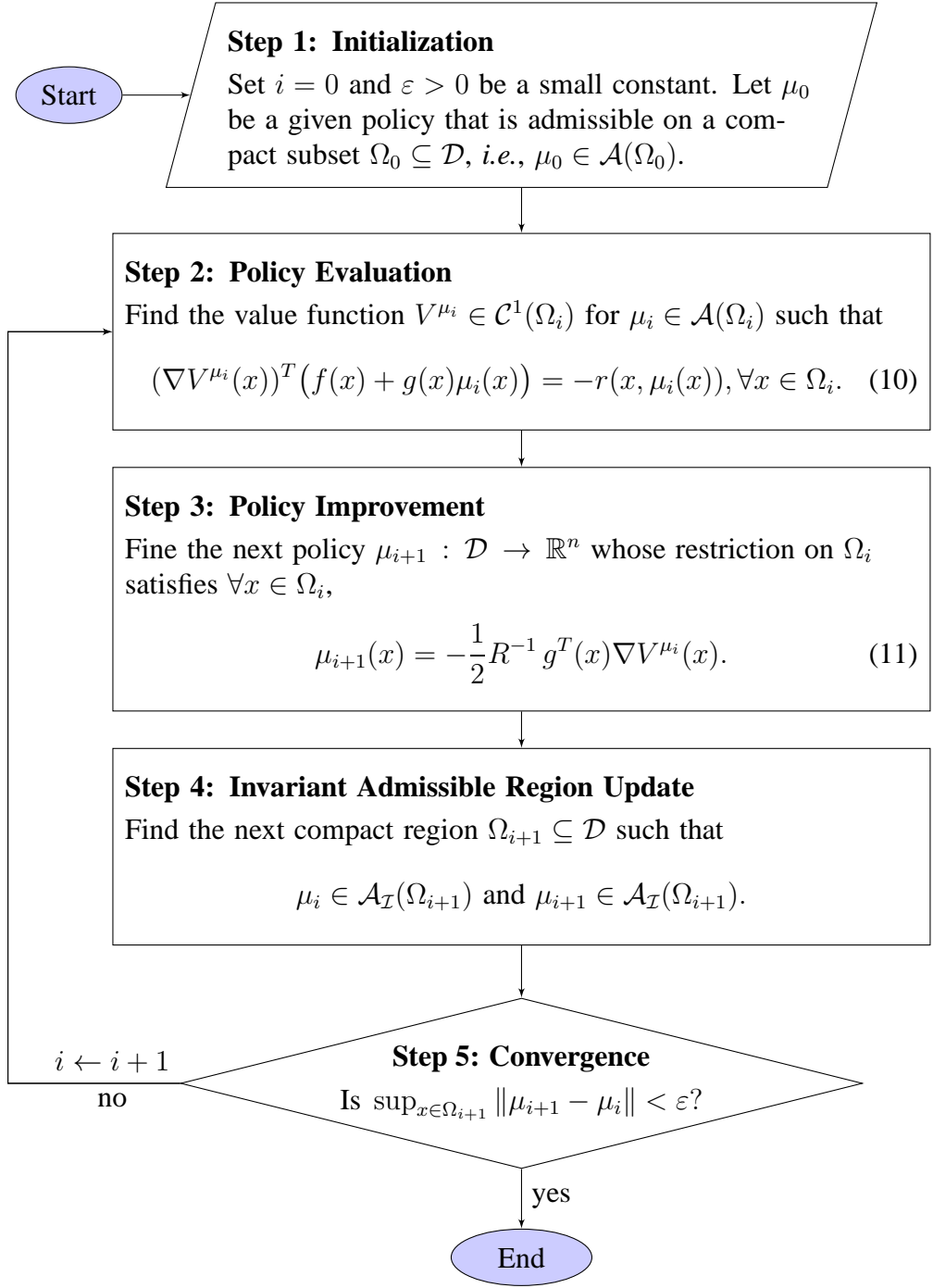

 These two problems can be solved at the same time
 if the domain $\mathcal{D}$
 is given
 as an invariant estimate of the regions of attraction $R_A(\mu_i)$
 for all the closed-loop systems $\dot x = f+g\mu_i$.
 That is, $\forall i \in \mathbb{Z}_+$, $\mathcal{D} \subseteq R_A(\mu_i)$ and
 \[
  x_0 \in \mathcal{D} \textrm{ implies }
  \phi(t;x_0, \mu_i) \in \mathcal{D}, \;\; \forall t \geq 0.
 \]
 To determine such an invariant attraction domain $\mathcal{D}$,
 however,
 the knowledge about
 all the updated policies $\mu_i$ ($i = 0,1,2, \cdots$) in PI
 has to be given \emph{a priori}, %before the algorithm runs.
 which is impossible but $i = 0$ before the algorithm runs.

 Instead of this unrealistic approach,
 this paper solves the addressed problems
 by using another technique,
 which is used in the proposed PI method
 and determines, for a given domain $\mathcal{D}$ and at each $i$-th iteration,
 the next %compact admissible
 region $ \Omega_{i+1}$
 such that
 both the current policy $\mu_i$ and the next policy $\mu_{i+1}$ are invariantly admissible on $ \Omega_{i+1}$,
 \emph{i.e.},
 $\mu_i, \mu_{i+1} \in \mathcal{A}_\mathcal{I}( \Omega_{i+1})$.
 Fig. \ref{fig:modified PI} describes
 the whole process of the proposed PI algorithm,
 where the next invariant admissible domain $ \Omega_{i+1}$ is determined
 in the process of ``invariant admissible region update''
 that is newly introduced for the safe learning of
 both the optimal solution $(V^*, \mu^*)$ and the corresponding invariant admissible region.
 Policy evaluation and improvement are the same as those
 in the SAM \cite{Beard1995,Beard1997}
 except that they are performed
 in the domain $ \Omega_i$, instead of
 in the whole domain $\mathcal{D}$.

 \subsection{Invariant admissibility and monotonic decreasing properties}
 Related to policy evaluation and policy improvement,
 the following theorem states
 the invariant admissibility of the policies and the  monotonic decreasing property
 of the sequence of associated value functions on the compact subset $\bar \Omega_{c_i}^{\mu_i} \subseteq \mathcal{D}$
 defined with a positive constant $c_i > 0$ as
 \[
  \bar \Omega_{c_i}^{\mu_i} = \{x \in \mathbb{R}^n : V^{\mu_i}(x) \leq c_i \}.
 \]

 \begin{theorem}
   \label{thm:main thm1}
    Assume $\mu_i \in \mathcal{A}(\Omega_i)$, and
    let $\Upsilon_i \subseteq \mathcal{D}$ be any feasible subset of $R_A(\mu_i)$.
    Then, $\mu_i \in \mathcal{A}(\Upsilon_i)$.
    Moreover,
    if $V^{\mu_i}$ is continuously differentiable on $\Upsilon_i$,
    $\mu_{i+1}$ satisfies \eqref{eq:Policy Improvement} for all $x \in \Upsilon_i$,
    and $c_i$ is chosen such that $\bar \Omega_{c_i}^{\mu_i} \subseteq \Upsilon_i$,
    then
    \begin{enumerate} \itemsep-0.3em
    \item
    $\mu_{i+1}$ is a policy on $\bar \Omega_{c_i}^{\mu_i}$;
    \item
    $\mu_i, \mu_{i+1} \in \mathcal{A}_\mathcal{I}( \bar \Omega_{c_i}^{\mu_i})$;
    \item
    for all $x \in  \bar \Omega_{c_i}^{\mu_i}$,
    the next value function $V^{\mu_{i+1}}$ satisfies
   \begin{equation}
      0 < V^{\mu_{i+1}}(x) \leq V^{\mu_{i}}(x) < \infty.
      \label{eq:lemma:decreasing property of V}
   \end{equation}
   \end{enumerate}
 \end{theorem}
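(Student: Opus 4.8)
\noindent\emph{Sketch of the intended argument.}

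The plan is to push everything through Theorem~\ref{thm:Lyapunov thm} with the single Lyapunov function $V = V^{\mu_i}$, used for the two successive closed loops $\dot x = f + g\mu_i$ and $\dot x = f + g\mu_{i+1}$. The opening assertion $\mu_i \in \mathcal{A}(\Upsilon_i)$ is exactly Lemma~\ref{lemma:admissibility on Upsilon} with $\Omega = \Omega_i$ and the feasible subset $\Upsilon = \Upsilon_i$; in particular $V^{\mu_i}$ is defined, positive definite and finite on all of $\Upsilon_i$, and since by hypothesis $V^{\mu_i} \in \mathcal{C}^1(\Upsilon_i)$, the discussion preceding Lemma~\ref{lemma:admissibility to inv. admissibility} gives the Lyapunov equation \eqref{eq:Lyapunov equation for mu} on $\Upsilon_i$. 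Because $\bar\Omega_{c_i}^{\mu_i} \subseteq \Upsilon_i$ by the choice of $c_i$, and $\bar\Omega_{c_i}^{\mu_i}$ is precisely the set $\bar\Omega_c^\mu$ of \eqref{eq:Omega_c^mu} with $\mu = \mu_i$, $c = c_i$, Lemma~\ref{lemma:admissibility to inv. admissibility} applied on $\Upsilon_i$ immediately gives $\mu_i \in \mathcal{A}_\mathcal{I}(\bar\Omega_{c_i}^{\mu_i})$, half of item~2. For item~1, I would observe that on $\bar\Omega_{c_i}^{\mu_i} \subseteq \Upsilon_i$ the map $x \mapsto \mu_{i+1}(x) = -\tfrac12 R^{-1} g^T(x)\nabla V^{\mu_i}(x)$ is continuous (product of the continuous $g$ and the continuous $\nabla V^{\mu_i}$), and since the positive definite $\mathcal{C}^1$ function $V^{\mu_i}$ has the origin as an interior minimiser, $\nabla V^{\mu_i}(0) = 0$, whence $\mu_{i+1}(0) = 0$; thus $\mu_{i+1}$ is a policy on $\bar\Omega_{c_i}^{\mu_i}$ in the sense of Definition~\ref{def:policy}.

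The core step is to verify that $V^{\mu_i}$ is a \emph{strict} Lyapunov function for the next closed loop as well. Differentiating $V^{\mu_i}$ along $\dot x = f + g\mu_{i+1}$, adding and subtracting $\nabla^T V^{\mu_i} g\mu_i$, substituting the Lyapunov equation \eqref{eq:Lyapunov equation for mu} and the identity $g^T(x)\nabla V^{\mu_i}(x) = -2R\,\mu_{i+1}(x)$ coming from \eqref{eq:Policy Improvement}, and completing the square in the quadratic form of $R$, one should get, for all $x \in \Upsilon_i$,
\begin{align*}
   \dot V^{\mu_i}(x)
   &= \nabla^T V^{\mu_i}(x)\bigl(f(x) + g(x)\mu_i(x)\bigr) + \nabla^T V^{\mu_i}(x)\,g(x)\bigl(\mu_{i+1}(x) - \mu_i(x)\bigr) \\
   &= -\,r(x,\mu_{i+1}(x)) - \bigl(\mu_{i+1}(x) - \mu_i(x)\bigr)^T R\,\bigl(\mu_{i+1}(x) - \mu_i(x)\bigr) .
\end{align*}
Since $R$ is positive definite and $r(x,\mu_{i+1}(x)) \ge Q(x) > 0$ for $x \ne 0$ while $\dot V^{\mu_i}(0) = 0$, this is negative definite on $\Upsilon_i$, so $V^{\mu_i}$ meets all the hypotheses of Theorem~\ref{thm:Lyapunov thm} on $\Omega = \Upsilon_i$ for $\mu = \mu_{i+1}$; with the constant $c_i$ (for which $\bar\Omega_{c_i}^{\mu_i}$ is the compact sublevel set $\bar\Omega_c$ of \eqref{eq:Omega_c}), Theorem~\ref{thm:Lyapunov thm} yields that $\mu_{i+1}$ asymptotically stabilises $(f,g)$ on $\bar\Omega_{c_i}^{\mu_i}$ and that $\bar\Omega_{c_i}^{\mu_i}$ is invariant under $\mu_{i+1}$. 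Hence for every $x_0 \in \bar\Omega_{c_i}^{\mu_i}$ the trajectory $\phi(t;x_0,\mu_{i+1})$ exists for all $t \ge 0$, stays in the compact set $\bar\Omega_{c_i}^{\mu_i} \subseteq \Upsilon_i \subseteq \mathcal{D}$, and tends to $0$.

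The remaining work, including the proof of item~3, is an integration of the displayed formula for $\dot V^{\mu_i}$ along $\phi(\cdot\,;x_0,\mu_{i+1})$ — legitimate precisely because that trajectory never leaves $\Upsilon_i$, the set on which $V^{\mu_i}$ is $\mathcal{C}^1$, which is the very point glossed over in \cite{Beard1995,Beard1997}. Writing $\phi_\tau = \phi(\tau;x_0,\mu_{i+1})$, for every $T > 0$ and $x_0 \in \bar\Omega_{c_i}^{\mu_i}$,
\begin{align*}
   V^{\mu_i}(x_0) - V^{\mu_i}(\phi_T)
   &= \int_0^T \Bigl[\, r(\phi_\tau,\mu_{i+1}(\phi_\tau)) + \bigl(\mu_{i+1}(\phi_\tau) - \mu_i(\phi_\tau)\bigr)^T R \bigl(\mu_{i+1}(\phi_\tau) - \mu_i(\phi_\tau)\bigr)\Bigr] d\tau \\
   &\ge \int_0^T r(\phi_\tau,\mu_{i+1}(\phi_\tau))\, d\tau ,
\end{align*}
so $\int_0^T r(\phi_\tau,\mu_{i+1}(\phi_\tau))\,d\tau \le V^{\mu_i}(x_0) < \infty$ for all $T$; letting $T \to \infty$ with $\phi_T \to 0$ and using the continuity of $V^{\mu_i}$ together with $V^{\mu_i}(0) = 0$, one obtains $V^{\mu_{i+1}}(x_0) = \int_0^\infty r(\phi_\tau,\mu_{i+1}(\phi_\tau))\,d\tau \le V^{\mu_i}(x_0) < \infty$. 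This gives $\mu_{i+1} \in \mathcal{A}(\bar\Omega_{c_i}^{\mu_i})$ by Definition~\ref{def:admissibility}, hence with the invariance already established $\mu_{i+1} \in \mathcal{A}_\mathcal{I}(\bar\Omega_{c_i}^{\mu_i})$, completing item~2; the same estimate is the upper bound in \eqref{eq:lemma:decreasing property of V}, and its lower bound $V^{\mu_{i+1}}(x) > 0$ for $x \ne 0$ is just the positive definiteness of any value function on its domain. The step I expect to be the main obstacle is exactly this invariance argument that licenses the differentiation and the descent/finiteness estimate: choosing the working region to be the sublevel set $\bar\Omega_{c_i}^{\mu_i}$ of $V^{\mu_i}$ itself is what makes negativity of $\dot V^{\mu_i}$ and invariance of the region come out jointly from Theorem~\ref{thm:Lyapunov thm} rather than being assumed separately.
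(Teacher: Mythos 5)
Your proposal is correct and follows essentially the same route as the paper's proof: Lemma~\ref{lemma:admissibility on Upsilon} for $\mu_i \in \mathcal{A}(\Upsilon_i)$, Lemma~\ref{lemma:admissibility to inv. admissibility} for $\mu_i \in \mathcal{A}_\mathcal{I}(\bar\Omega_{c_i}^{\mu_i})$, the gradient/continuity argument for item~1, and then $V^{\mu_i}$ as a Lyapunov function for $\dot x = f+g\mu_{i+1}$ fed into Theorem~\ref{thm:Lyapunov thm} followed by integration along the invariant trajectory. The only (cosmetic) differences are that you complete the square to get the exact identity $\dot V^{\mu_i} = -r(x,\mu_{i+1}) - (\mu_{i+1}-\mu_i)^T R (\mu_{i+1}-\mu_i)$ where the paper applies Young's inequality to the equivalent expression \eqref{eq:lemma:admissible Omega i}, and that you integrate over $[0,T]$ before passing to the limit rather than integrating directly to infinity.
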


 \begin{proof}
   First, $\mu_i \in \mathcal{A}(\Upsilon_i)$ is easily proven
   by applying Lemma \ref{lemma:admissibility on Upsilon}
   with $\Omega = \Omega_i$ and $\Upsilon = \Upsilon_i$.
   Here, $\mu_i \in \mathcal{A}(\Upsilon_i)$ implies that
   $V^{\mu_i}(x)$ is finite $\forall x \in \Upsilon_i$.
   Next, assume $V^{\mu_i}$ is continuously differentiable on $\Upsilon_i$,
   and $\bar \Omega_{c_i}^{\mu_i} \subseteq \Upsilon_i$.
   Then, we have $\mu_i \in \mathcal{A}_\mathcal{I}(\bar \Omega_{c_i}^{\mu_i})$
   by Lemma \ref{lemma:admissibility to inv. admissibility}
   with
    $\Omega = \Upsilon_i$ and
    $\bar \Omega_{c}^{\mu} = \bar \Omega_{c_i}^{\mu_i}$.
   For the remaining of the proof,
   assume further that
   $\mu_{i+1}$ satisfies \eqref{eq:Policy Improvement} for all $x \in \Upsilon_i$.

   We now show that $\mu_{i+1}$ is a policy on $\bar \Omega_{c_i}^{\mu_i}$.
   Since    $V^{\mu_i}$ is $\mathcal{C}^1$ and
            positive definite on the domain $\Upsilon_i$
            containing a neighborhood of the origin,
   $0 \in \mathbb{R}^n$ is the global minimum where $\nabla V^{\mu_i}(0) = 0$.
   Also note that
   $g(x)$ and $\nabla V^{\mu_i}(x)$
   are continuous on the compact subset $\bar \Omega_{c_i}^{\mu_i}$
   ($\because$
        $\bar \Omega_{c_i}^{\mu_i} \subseteq \Upsilon_i \subseteq \mathcal{D}$,
        $g \in \mathcal{C}^0(\mathcal{D})$,
        and
        $V^{\mu_i} \in \mathcal{C}^1(\Upsilon_i)$).
   So, we have $\mu_{i+1}(0) = 0$
   from \eqref{eq:Policy Improvement} and
   $\nabla V^{\mu_i}(0) = 0$.
   From \eqref{eq:Policy Improvement}
        and
        the continuity of $\nabla V^{\mu_i}(x)$ and $g(x)$ on
        the compact subset $\bar \Omega_{c_i}^{\mu_i}$,
   it can be also shown that
   $\mu_{i+1}$ is continuous on $\bar \Omega_{c_i}^{\mu_i}$.
   Therefore, $\mu_{i+1}$ is a policy on $\bar \Omega_{c_i}^{\mu_i}$.

   For the proof of
    $\mu_{i+1} \in \mathcal{A}_\mathcal{I}( \bar \Omega_{c_i}^{\mu_i})$ and \eqref{eq:lemma:decreasing property of V},
   consider $V^{\mu_i}$ as a Lyapunov function candidate
   for the system $\dot x = f(x) + g(x)\mu_{i+1}(x)$.
   Differentiating $V^{\mu_i}(x)$
   with respect to the system $\dot x = f + g\mu_{i+1}$,
   we have
   \begin{align}
     \dot V^{\mu_i}(x)
     &= \nabla ^T V^{\mu_i}(x) \cdot \big ( f(x) + g(x)\mu_{i+1}(x) \big ) \notag \\
     &= - Q(x) - \mu_i^T R \mu_i - 2\mu_{i+1}^T R (\mu_{i+1} - \mu_i),
     \label{eq:lemma:admissible Omega i}
   \end{align}
   where \eqref{eq:Policy Evaluation} and \eqref{eq:Policy Improvement}
   are substituted in the second equality.
   Applying Young's inequality
   $2x^TRy \leq x^T R x + y^T R y$ for $x,y \in \mathbb{R}^m$
   to \eqref{eq:lemma:admissible Omega i},
   we obtain
   \begin{equation}
     \dot V^{\mu_i}(x) \leq - r(x, \mu_{i+1})<0,\;\; %- \|\mu_{i+1} - \mu_i\|_R^2, \;\;
     \forall x \in  \Upsilon_i.
     \label{eq:lemma:dot V inequality}
   \end{equation}
   Therefore, by Theorem \ref{thm:Lyapunov thm}
   with $\Omega = \Upsilon_i$ and
        $\bar \Omega_{c} = \bar \Omega_{c_i}^{\mu_i} \subseteq \Upsilon_i$,
   $\mu_{i+1}$ asymptotically stabilizes the system $(f,g)$ on
   $\bar \Omega_{c_i}^{\mu_i}$,
   and $\bar \Omega_{c_i}^{\mu_i}$ is invariant under $\mu_{i+1}$, \emph{i.e.},
   \begin{equation}
     \textrm{if } x_0 \in  \bar \Omega_{c_i}^{\mu_i},
     \textrm{ then }
     \phi(t; x_0, \mu_{i+1}) \in  \bar \Omega_{c_i}^{\mu_i} \textrm{ for all } t \geq 0.
     \label{eq:lemma:invariance}
   \end{equation}
   Here,
   since we assume
    $\bar \Omega_{c_i}^{\mu_i} \subseteq \Upsilon_i$ and
    $V^{\mu_i} \in \mathcal{C}^1(\Upsilon_i)$,
   the invariance \eqref{eq:lemma:invariance} on $\bar \Omega_{c_i}^{\mu_i}$
   implies that
   $V^{\mu_i}(\phi(t; x_0, \mu_{i+1}))$ and $\dot V^{\mu_i}(\phi(t; x_0, \mu_{i+1}))$
   are finite for all $x_0 \in  \bar \Omega_{c_i}^{\mu_i}$ and all $t \geq 0$.
   So, one can integrate \eqref{eq:lemma:dot V inequality} from `$t=0$' to `$\infty$'
   to obtain
   \begin{align*}
     0  < V^{\mu_{i+1}}(x_0)
        &= \int_0^\infty r(\phi(\tau; x_0, \mu_{i+1}), \mu_{i+1}(\phi(\tau; x_0, \mu_{i+1}))) \; d\tau \\
        &\leq - \int_0^\infty \dot V^{\mu_i}(\phi(\tau; x_0, \mu_{i+1})) \; d\tau
        = V^{\mu_i}(x_0) < \infty,
%     \label{eq:lemma:dot V inequality}
   \end{align*}
   where we have used
   $
    \lim_{t \to \infty} V^{\mu_i}(\phi(t; x_0, \mu_{i+1})) = 0
   $
   in the equality,
   which holds $\forall x_0 \in \bar \Omega_{c_i}^{\mu_i}$
   since $V^{\mu_i}(0) = 0$ and
   \[
    \lim_{t \to \infty} \phi(t; x_0, \mu_{i+1}) = 0 \;\; \forall x_0 \in \bar \Omega_{c_i}^{\mu_i}
   \]
   by asymptotic stability.
   Therefore,
   $V^{\mu_{i+1}}$ satisfies $0 < V^{\mu_{i+1}}(x_0) \leq V^{\mu_{i}}(x_0) < \infty$
   $\forall x_0 \in  \bar \Omega_{c_i}^{\mu_i}$.
   This implies
   $\mu_{i+1} \in \mathcal{A}(\bar \Omega_{c_i}^{\mu_i})$,
   and we have $\mu_{i+1} \in \mathcal{A}_\mathcal{I}( \bar \Omega_{c_i}^{\mu_i})$
   by \eqref{eq:lemma:invariance}.
  \end{proof}

 %Obtaining a compact invariantly admissible region: $\min_{}$ writing...
 %describe $R_A(\mu_{i+1}) \subseteq \mathcal{D}$ case: in this case,
 %since the ROA is itself invariant, we can obtain

 If a feasible subset $\Upsilon_i$ of $R_A(\mu_i)$ is given \emph{a priori},
 it can be used to determine the invariant admissible region
 $\bar \Omega_{c_i}^{\mu_i} \subseteq \Upsilon_i$.
 Moreover, if the domain $\mathcal{D}$ is extended to satisfy
 $R_A(\mu) \subseteq \mathcal{D}$ at $i$-th iteration,
 then $\Upsilon_i$ can be given as the largest attractive set
 $\Upsilon_i = R_A(\mu_i)$ for the policy $\mu_i$.
 In this case, $\Upsilon_i$ is also a feasible subset of $\mathcal{D}$
 since $R_A(\mu)$ is itself an invariant set \cite{Khalil2002} and contained by $\mathcal{D}$.
 However, calculating the region of attraction $R_A(\mu_i)$ at each $i$-th iteration
 or its feasible subset
 is not a trivial task and needs high computational burden.
 To avoid such difficulties,
 the admissible set $\Omega_i \in \mathcal{D}$ given \emph{a priori}
 can be used as a feasible subset $\Upsilon_i$ of $R_A(\mu_i)$,
 \emph{i.e.}, $\Upsilon_i = \Omega_i$.
 The following corollary shows that
 under the assumption
 \begin{assumption}
 \label{assumption:C1 of V_i}
    For each $\mu_{i} \in \mathcal{A}(\Omega_i)$,
    $V^{\mu_i}$ is continuously differentiable on $\Omega_i$, i.e., $V^{\mu_i} \in \mathcal{C}^1(\Omega_i)$;
 \end{assumption}
 \noindent
 this choice ``$\Upsilon_i = \Omega_i$'' is reasonable.
 The proof of the corollary can be easily done by applying Theorem \ref{thm:main thm1} with $\Upsilon_i = \Omega_i$
 under Assumption \ref{assumption:C1 of V_i}.

 \begin{corollary}
   \label{cor:admissibility of the next policy}
    Assume $\mu_i \in \mathcal{A}(\Omega_i)$
    and $c_i$ is chosen such that $\bar \Omega_{c_i}^{\mu_i} \subseteq \Omega_i$.
    Then,
    \begin{enumerate} \itemsep-0.3em
    \item
    $\mu_{i+1}$ is a policy on $\bar \Omega_{c_i}^{\mu_i}$;
    \item
    $\mu_i, \mu_{i+1} \in \mathcal{A}_\mathcal{I}( \bar \Omega_{c_i}^{\mu_i})$;
    \item
    %$V^{\mu_{i+1}}$ satisfies
    $
        0 < V^{\mu_{i+1}}(x) \leq V^{\mu_{i}}(x) < \infty
    $
    for all $x \in  \bar \Omega_{c_i}^{\mu_i}$.
    \end{enumerate}
 \end{corollary}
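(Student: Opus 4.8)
The plan is to obtain the corollary as the special case $\Upsilon_i = \Omega_i$ of Theorem~\ref{thm:main thm1}, so that the only genuine work is to check that the \emph{a priori} admissible set $\Omega_i$ legitimately plays the role of the auxiliary feasible stabilizing set $\Upsilon_i$, and that the differentiability hypothesis on $V^{\mu_i}$ is available.

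First I would unpack the assumption $\mu_i \in \mathcal{A}(\Omega_i)$ via Definition~\ref{def:admissibility}. Item~1 of that definition says $\mu_i$ asymptotically stabilizes $(f,g)$ on $\Omega_i$, hence $\phi(t;x_0,\mu_i) \to 0$ as $t\to\infty$ for every $x_0 \in \Omega_i$; this is exactly the defining membership condition for $R_A(\mu_i)$, so $\Omega_i \subseteq R_A(\mu_i)$. Item~2 of the same definition states that $\phi(t;x_0,\mu_i)$ is feasible on $\Omega_i$, i.e.\ $x_0\in\Omega_i$ implies $\phi(t;x_0,\mu_i)\in\mathcal{D}$ for all $t\ge 0$, which is precisely the requirement in Definition~\ref{def:feasible trj}. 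Consequently $\Omega_i$ is a feasible subset of $R_A(\mu_i)$ in the sense needed by Theorem~\ref{thm:main thm1}, and we may set $\Upsilon_i = \Omega_i$.

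Next, Assumption~\ref{assumption:C1 of V_i} gives $V^{\mu_i} \in \mathcal{C}^1(\Omega_i)$, so the continuous differentiability hypothesis of Theorem~\ref{thm:main thm1} holds with $\Upsilon_i = \Omega_i$. The policy improvement rule \eqref{eq:Policy Improvement} is imposed by construction for all $x \in \Omega_i$, and the standing choice of $c_i$ ensures $\bar\Omega_{c_i}^{\mu_i} \subseteq \Omega_i = \Upsilon_i$. With all premises of Theorem~\ref{thm:main thm1} verified for $\Upsilon_i = \Omega_i$, its three conclusions transfer directly: $\mu_{i+1}$ is a policy on $\bar\Omega_{c_i}^{\mu_i}$; both $\mu_i,\mu_{i+1}\in\mathcal{A}_\mathcal{I}(\bar\Omega_{c_i}^{\mu_i})$; and $0 < V^{\mu_{i+1}}(x) \le V^{\mu_i}(x) < \infty$ for all $x \in \bar\Omega_{c_i}^{\mu_i}$. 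These are exactly items~1--3 of the corollary.

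Since the argument is a pure specialization, there is no real obstacle. The only point that warrants a line of care is the very first one — confirming that the admissible region $\Omega_i$ already satisfies the two clauses (subset of $R_A(\mu_i)$, and feasibility) that Theorem~\ref{thm:main thm1} demands of $\Upsilon_i$ — and even this is immediate from Definition~\ref{def:admissibility}. One could alternatively re-derive $\mu_i \in \mathcal{A}(\Omega_i)$ and $\mu_i \in \mathcal{A}_\mathcal{I}(\bar\Omega_{c_i}^{\mu_i})$ through Lemma~\ref{lemma:admissibility on Upsilon} and Lemma~\ref{lemma:admissibility to inv. admissibility}, but invoking Theorem~\ref{thm:main thm1} wholesale is cleaner and keeps the monotonicity bound \eqref{eq:lemma:decreasing property of V} in the same stroke.
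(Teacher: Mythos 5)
Your proposal is correct and follows exactly the paper's own route: the paper likewise obtains the corollary by applying Theorem~\ref{thm:main thm1} with $\Upsilon_i = \Omega_i$ under Assumption~\ref{assumption:C1 of V_i}. Your explicit check that $\Omega_i$ qualifies as a feasible subset of $R_A(\mu_i)$ via Definition~\ref{def:admissibility} is the only detail the paper leaves implicit, and it is the right detail to verify.
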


  Furthermore, the next theorem
  states that the (invariant) admissibility and
  the value function decreasing property are preserved
  under $\Omega_{i+1}$ determined by $\Omega_{i+1} = \bar \Omega_{c_i}^{\mu_i}$
  and $c_i > 0$ chosen such that
  $\bar \Omega_{c_i}^{\mu_i} \subseteq \Omega_i$ is guaranteed.

  \begin{theorem}
  \label{thm:main thm1-1}
  Assume the initial policy $\mu_0$ is admissible
  on $\Omega_0 \subseteq \mathcal{D}$.
  %i.e., $\mu_0 \in \mathcal{A}(\Omega_0)$.
  If the policies $\{\mu_i\}$ and
  the value functions $\{V^{\mu_i}\in \mathcal{C}^1\}$
  are generated by the proposed PI (Fig. \ref{fig:modified PI})
  with $(\Omega_{i+1}, c_i)$
  determined at each $i$-th step
  such that
    $\Omega_{i+1} = \bar \Omega_{c_i}^{\mu_i}$ and
    $\bar \Omega_{c_i}^{\mu_i} \subseteq \Omega_i$,
  then for all $i \in \{0,1,2, \cdots\}$,
  \begin{enumerate} \itemsep-0.3em
  \item
  $\mu_{i+1}$ is a policy on $\Omega_{i+1}$
  \item
  $\mu_i$ and $\mu_{i+1}$ are invariantly admissible on $\Omega_{i+1}$;
  \item
  for all $x \in \Omega_{i+1}$,
  \begin{equation}
  \label{eq:decreasing V}
  0 < V^{\mu_{i+1}}(x) \leq V^{\mu_{i}}(x) \leq \cdots \leq V^{\mu_{0}}(x).
  \end{equation}
  \end{enumerate}
  \end{theorem}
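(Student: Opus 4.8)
The plan is to prove all three claims simultaneously by induction on $i$, with Corollary~\ref{cor:admissibility of the next policy} doing the work at each step and the nesting property $\Omega_{i+1} = \bar \Omega_{c_i}^{\mu_i} \subseteq \Omega_i$ carrying the monotone chain \eqref{eq:decreasing V} from one iteration to the next. The key structural observation is that the substantive content of the theorem --- invariant admissibility of both $\mu_i$ and $\mu_{i+1}$ on the updated region, together with the one-step decrease of the value function --- is exactly what Corollary~\ref{cor:admissibility of the next policy} delivers, provided one can verify its hypotheses ($\mu_i \in \mathcal{A}(\Omega_i)$ and $\bar \Omega_{c_i}^{\mu_i} \subseteq \Omega_i$) at every iteration.

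For the base case $i = 0$, the hypothesis gives $\mu_0 \in \mathcal{A}(\Omega_0)$ directly, and Assumption~\ref{assumption:C1 of V_i} supplies $V^{\mu_0} \in \mathcal{C}^1(\Omega_0)$; since $c_0$ is chosen so that $\bar \Omega_{c_0}^{\mu_0} \subseteq \Omega_0$, Corollary~\ref{cor:admissibility of the next policy} applies verbatim and yields claims 1--3 for $i = 0$ with $\Omega_1 = \bar \Omega_{c_0}^{\mu_0}$. For the inductive step I would assume the statement at index $i-1$; in particular $\mu_i \in \mathcal{A}_\mathcal{I}(\Omega_i)$, hence $\mu_i \in \mathcal{A}(\Omega_i)$ by Proposition~\ref{prop:A to AI}, so Step~2 of the algorithm is well-posed and produces $V^{\mu_i} \in \mathcal{C}^1(\Omega_i)$ (Assumption~\ref{assumption:C1 of V_i}). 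With the standing choice $\bar \Omega_{c_i}^{\mu_i} \subseteq \Omega_i$, Corollary~\ref{cor:admissibility of the next policy} at index $i$ then gives claims 1 and 2 for $i$, and the one-step bound $0 < V^{\mu_{i+1}}(x) \leq V^{\mu_i}(x) < \infty$ on $\Omega_{i+1}$.

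It remains to upgrade this one-step bound to the full chain \eqref{eq:decreasing V}. Here I would use that the update rule forces $\Omega_{i+1} \subseteq \Omega_i \subseteq \cdots \subseteq \Omega_0$, so that for each $j \in \{0, 1, \dots, i\}$ the inequality $V^{\mu_{j+1}}(x) \leq V^{\mu_j}(x)$ --- already established on $\Omega_{j+1}$ at the earlier stages of the induction --- holds in particular for all $x \in \Omega_{i+1} \subseteq \Omega_{j+1}$, and each $V^{\mu_j}$ is well defined there since $\Omega_{i+1} \subseteq \Omega_j$. Concatenating these restricted inequalities together with the positivity of $V^{\mu_{i+1}}$ yields \eqref{eq:decreasing V} on $\Omega_{i+1}$, completing the induction.

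I do not expect a genuine obstacle: the analysis is essentially bookkeeping once Corollary~\ref{cor:admissibility of the next policy} (ultimately Theorem~\ref{thm:main thm1}) is in hand. The only point demanding care --- and the place where a sloppy write-up would break --- is the domain tracking in the last step: each inequality of \eqref{eq:decreasing V} was originally proven on its own set $\Omega_{j+1}$, so one must explicitly invoke the monotone nesting $\Omega_{i+1} \subseteq \Omega_{j+1}$ (and $\Omega_{i+1} \subseteq \Omega_j$ for the value functions to make sense) before asserting that all of them hold simultaneously on the common domain $\Omega_{i+1}$.
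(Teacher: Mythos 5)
Your proposal is correct and follows essentially the same route as the paper's own proof: an induction in which Corollary~\ref{cor:admissibility of the next policy} supplies the one-step invariant admissibility and decrease, Proposition~\ref{prop:A to AI} converts $\mu_{i}\in\mathcal{A}_\mathcal{I}(\Omega_{i})$ back to $\mu_{i}\in\mathcal{A}(\Omega_{i})$ to restart the step, and the nesting $\Omega_{i+1}\subseteq\Omega_{j+1}$ transfers each inequality $V^{\mu_{j+1}}\leq V^{\mu_j}$ to the common domain $\Omega_{i+1}$. The paper phrases the induction informally as ``repeating this process $i$ times,'' but the content and the domain-tracking in the final step are the same as yours.
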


  \begin{proof}
  Since we assume $\mu_0 \in \mathcal{A}(\Omega_0)$
  and $\Omega_1 = \bar \Omega_{c_0}^{\mu_0} \subseteq \Omega_0$,
  Corollary \ref{cor:admissibility of the next policy} implies
  \begin{enumerate} \itemsep-0.3em
    \item
    $\mu_{1}$ is a policy on $\Omega_1$;
    \item
    $\mu_0, \mu_{1} \in \mathcal{A}_\mathcal{I}(\Omega_1)$;
    \item
    $V^{\mu_{1}}$ satisfies
    $0 < V^{\mu_{1}}(x) \leq V^{\mu_{0}}(x) < \infty$
    for all $x \in  \Omega_{1}$.
  \end{enumerate}
  Then, we have again $\mu_{1} \in \mathcal{A}(\Omega_1)$
  by Proposition \ref{prop:A to AI}.
  Repeating this process $i$-times, we can prove the first and second parts;
  this process also proves that for any $j \in \{0,1,2,\cdots,i-1,i\}$,
  \begin{equation}
    0 < V^{\mu_{j+1}}(x) \leq V^{\mu_{j}}(x) \;\;\;
    \label{eq:main thm1-1:contraction of V}
  \end{equation}
  is satisfied for all $x \in \Omega_{j+1}$.
  Finally, since $\Omega_{i+1}$ satisfies
  \[
        \Omega_{i+1} \subseteq \cdots \subseteq \Omega_{j+1} \subseteq \Omega_j \subseteq \cdots \subseteq \Omega_1 \subseteq \Omega_0,
  \]
  \eqref{eq:main thm1-1:contraction of V} also holds
  for all $x \in \Omega_{i+1}$ ($\because \Omega_{i+1} \subseteq \Omega_{j+1}$)
  and all $j \in \{0,1,2,\cdots, i\}$,
  which completes the proof of the third statement.
  \end{proof}

 \subsection{Convergence analysis}
 Now, we analyze the convergence properties of
 the proposed PI method (Fig. \ref{fig:modified PI}) under the assumption that
 \begin{assumption}
 \label{assumption:Omega and c}
    The initial admissible region $\Omega_0$
    satisfies $\Omega_0 \subseteq \Omega^*$,
    and $(\Omega_{i+1}, c_i)$ is determined for all $i \in \{0,1,2,\cdots\}$ such that $\Omega_{i+1} = \bar \Omega_{c_i}^{\mu_i}$
          and
         $\bar \Omega_{c_i}^{\mu_i} \subseteq \Omega_i$.
    That is,
    \[
        \Omega_{i+1} \subseteq \Omega_i \subseteq \cdots \subseteq \Omega_0 \subseteq \Omega^* \subseteq \mathcal{D}, \;\;
        \forall i \in \{0,1,2,\cdots\}.
    \]
 \end{assumption}

 \begin{theorem}
 \label{thm:conv1}
  Consider
  policies $\{\mu_i\}$ and
  value functions $\{V^{\mu_i}\in \mathcal{C}^1\}$
  generated by the invariantly admissible PI
  under an admissible initial policy $\mu_0$
  and
  Assumptions \ref{assumption:Lipschitz}--\ref{assumption:Omega and c}.
  If for some $k \in \{0,1,2\cdots\}$,
  $V^{\mu_k}(x) = V^{\mu_{k+1}}(x)$ holds for all $x \in \Omega_{k+1}$,
  then
  \[
    \mu_k = \mu_{k+1} = \mu^* \textrm{ and } V^{\mu_k} = V^{\mu_{k+1}}= V^* \textrm{ on } \Omega_{k+1}.
  \]
 \end{theorem}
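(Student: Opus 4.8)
The plan is to argue in three stages. \emph{Stage 1:} show that the stalling hypothesis $V^{\mu_k}\equiv V^{\mu_{k+1}}$ on $\Omega_{k+1}$ forces $\mu_k\equiv\mu_{k+1}$ on $\Omega_{k+1}$. \emph{Stage 2:} substitute the policy improvement rule \eqref{eq:Policy Improvement} into the policy evaluation equation \eqref{eq:Policy Evaluation} to show that $V^{\mu_k}$ satisfies the HJB equation \eqref{eq:HJB} on $\Omega_{k+1}$. \emph{Stage 3:} use the uniqueness of the $\mathcal{C}^1$ positive definite HJB solution (Assumption \ref{assumption:HJB}), together with a verification-type comparison exploiting the invariance of $\Omega_{k+1}$ under $\mu_k$, to identify $V^{\mu_k}$ with $V^*$ and hence $\mu_k$ with $\mu^*$ on $\Omega_{k+1}$.

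For Stage 1, since $V^{\mu_k},V^{\mu_{k+1}}\in\mathcal{C}^1$ and coincide on $\Omega_{k+1}$, their gradients agree on the interior of $\Omega_{k+1}$ and, by continuity, $\nabla V^{\mu_k}\equiv\nabla V^{\mu_{k+1}}$ on $\Omega_{k+1}$. Writing \eqref{eq:Policy Evaluation} for $\mu_k$ and for $\mu_{k+1}$ on $\Omega_{k+1}$ and subtracting, the terms in $\nabla^T V^{\mu_k}f$ and $Q$ cancel, leaving $\nabla^T V^{\mu_k}\,g\,(\mu_{k+1}-\mu_k)=\mu_k^{T}R\mu_k-\mu_{k+1}^{T}R\mu_{k+1}$ on $\Omega_{k+1}$. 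Rewriting \eqref{eq:Policy Improvement} as $g^{T}\nabla V^{\mu_k}=-2R\mu_{k+1}$ to replace $\nabla^T V^{\mu_k}g$ by $-2\mu_{k+1}^{T}R$ and collecting terms, one gets $(\mu_{k+1}-\mu_k)^{T}R(\mu_{k+1}-\mu_k)=0$, so $\mu_k\equiv\mu_{k+1}$ on $\Omega_{k+1}$ because $R$ is positive definite. (This is exactly the equality case of Young's inequality in the proof of Theorem \ref{thm:main thm1}: $V^{\mu_k}\equiv V^{\mu_{k+1}}$ forces \eqref{eq:lemma:dot V inequality} to hold with equality, which occurs only when $\mu_{k+1}=\mu_k$.)

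For Stage 2, substituting $\mu_k=\mu_{k+1}=-\tfrac12R^{-1}g^{T}\nabla V^{\mu_k}$ back into \eqref{eq:Policy Evaluation} on $\Omega_{k+1}$ turns the cross term into $-\tfrac12\nabla^T V^{\mu_k}gR^{-1}g^{T}\nabla V^{\mu_k}$ and the control penalty into $\tfrac14\nabla^T V^{\mu_k}gR^{-1}g^{T}\nabla V^{\mu_k}$, so after combining one obtains $0=Q+\nabla^T V^{\mu_k}f-\tfrac14\nabla^T V^{\mu_k}gR^{-1}g^{T}\nabla V^{\mu_k}$ on $\Omega_{k+1}$; since $V^{\mu_k}$ is $\mathcal{C}^1$ and positive definite on its domain, it is a $\mathcal{C}^1$ positive definite solution of the HJB equation \eqref{eq:HJB} on $\Omega_{k+1}$. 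For Stage 3, one inequality is a verification argument: the HJB equation for $V^*$ on $\Omega^*$ is equivalent to $\min_u H(x,u,\nabla V^*(x))=0$, so $r(x,\mu_k(x))+\nabla^T V^*(x)\bigl(f(x)+g(x)\mu_k(x)\bigr)\ge 0$ on $\Omega^*\supseteq\Omega_{k+1}$; because $\mu_k\in\mathcal{A}_\mathcal{I}(\Omega_{k+1})$, the trajectory $\phi(t;x_0,\mu_k)$ stays in $\Omega_{k+1}\subseteq\Omega^*$ and tends to $0$, so integrating the resulting bound on $\tfrac{d}{dt}V^*(\phi(t;x_0,\mu_k))$ over $[0,\infty)$ yields $V^*(x_0)\le V^{\mu_k}(x_0)$ on $\Omega_{k+1}$. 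The reverse inequality $V^{\mu_k}\le V^*$ on $\Omega_{k+1}$ then follows from the uniqueness part of Assumption \ref{assumption:HJB} applied on $\Omega_{k+1}\subseteq\Omega^*$ (both $V^{\mu_k}$ and the restriction of $V^*$ being $\mathcal{C}^1$ positive definite HJB solutions there), or from the analogous verification argument run with $V^{\mu_k}$ and its greedy policy $\mu_k$ in place of $V^*$ and $\mu^*$. Combining the two inequalities gives $V^{\mu_k}=V^{\mu_{k+1}}=V^*$ on $\Omega_{k+1}$, whence $\mu_k=\mu_{k+1}=-\tfrac12R^{-1}g^{T}\nabla V^{\mu_k}=-\tfrac12R^{-1}g^{T}\nabla V^*=\mu^*$ on $\Omega_{k+1}$ by \eqref{eq:optimal policy u*}.

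The step I expect to be the main obstacle is the reverse inequality $V^{\mu_k}\le V^*$ on $\Omega_{k+1}$, i.e.\ bridging ``$V^{\mu_k}$ solves the HJB equation only on the sublevel set $\Omega_{k+1}$'' and ``the $\mathcal{C}^1$ positive definite HJB solution is unique on the possibly strictly larger set $\Omega^*$''. A direct verification argument along the $\mu^*$-trajectories is not immediate, since those trajectories need not remain inside $\Omega_{k+1}$, where $V^{\mu_k}$ is guaranteed to be $\mathcal{C}^1$ and to satisfy \eqref{eq:HJB}; the cleanest remedy is to read Assumption \ref{assumption:HJB} as local uniqueness of $\mathcal{C}^1$ positive definite HJB solutions, or to show directly that $V^{\mu_k}$, being the value function of the admissible greedy policy $\mu_k$ on the compact invariant set $\Omega_{k+1}$, is minimal among the value functions of policies admissible on $\Omega_{k+1}$.
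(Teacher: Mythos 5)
Your proof is correct, and its core (Stage 2 plus the appeal to Assumption \ref{assumption:HJB}) is exactly the paper's argument: the paper substitutes $V^{\mu_k}=V^{\mu_{k+1}}$ and the improvement rule \eqref{eq:Policy Improvement} with $i=k$ into the evaluation equation \eqref{eq:Policy Evaluation} with $i=k+1$, recovers the HJB equation \eqref{eq:HJB} on $\Omega_{k+1}$, and invokes uniqueness of the $\mathcal{C}^1$ positive definite solution to conclude $V^{\mu_{k+1}}=V^*$ and then $\mu_{k+1}=\mu^*$ via \eqref{eq:optimal policy u*}. What you add beyond the paper is worthwhile on two counts. First, your Stage 1 (subtracting the two Lyapunov equations and reducing to $(\mu_{k+1}-\mu_k)^T R(\mu_{k+1}-\mu_k)=0$) actually \emph{proves} $\mu_k=\mu_{k+1}$ on $\Omega_{k+1}$; the paper simply writes ``\eqref{eq:Policy Improvement} with $i=k$ and $\mu_{k+1}=\mu_k$'' and never justifies that identity (it only yields $\mu_{k+1}=\mu^*$ directly, and $\mu_k=\mu^*$ needs either your computation or the strict convexity of $H(x,\cdot,\nabla V^*)$). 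Second, the obstacle you flag at the end is a genuine looseness in the paper's own proof: Assumption \ref{assumption:HJB} asserts uniqueness of the HJB solution \emph{on} $\Omega^*$, whereas $V^{\mu_{k+1}}$ is only shown to satisfy \eqref{eq:HJB} on the possibly strictly smaller set $\Omega_{k+1}$, so the identification $V^{\mu_{k+1}}=V^*$ on $\Omega_{k+1}$ is not a literal application of the assumption as stated. Your verification argument along $\mu_k$-trajectories (which is legitimate precisely because $\mu_k\in\mathcal{A}_\mathcal{I}(\Omega_{k+1})$ keeps the trajectories inside $\Omega_{k+1}\subseteq\Omega^*$) gives $V^*\le V^{\mu_k}$ rigorously, and reading the assumption as local uniqueness, or running the symmetric verification argument for $\mu^*$ restricted to $\Omega_{k+1}$ (which does require knowing $\mu^*$-trajectories stay where $V^{\mu_k}$ solves the HJB, as you note), closes the other direction. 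In short: same route, with two of the paper's implicit steps made explicit.
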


 \begin{proof}
    Substituting $V^{\mu_k} = V^{\mu_{k+1}}$ and \eqref{eq:Policy Improvement}
    for $i=k$
    into \eqref{eq:Policy Evaluation} for $i=k+1$, we have
    \begin{align}
    Q(x) + (\nabla {V^{\mu_{k+1}}}(x))^T f(x)
    &-\frac{1}{4} (\nabla {V^{\mu_{k+1}}}(x))^T g(x) R^{-1} g^T(x) \nabla V^{\mu_{k+1}}(x) = 0,
    \label{eq:convergence1:HJB}
    \end{align}
    that holds for all $x \in \Omega_{k+1}$
    by Assumptions \ref{assumption:C1 of V_i} and \ref{assumption:Omega and c}.
    Note that \eqref{eq:convergence1:HJB} is the HJB equation \eqref{eq:HJB}.
    Since $\Omega_{k+1} \subseteq \Omega^*$
    by Assumption \ref{assumption:Omega and c}
    and $V^*$ is the unique $\mathcal{C}^1$-solution of the HJB equation \eqref{eq:HJB}
    over $\Omega^*$
    by Assumption \ref{assumption:HJB},
    we have $V^{\mu_k} = V^{\mu_{k+1}} = V^*$ on $\Omega_{k+1}$.
    Moreover, \eqref{eq:optimal policy u*}  and
    \eqref{eq:Policy Improvement} with $i = k$ and $\mu_{k+1} = \mu_k$
    proves $\mu_k = \mu_{k+1} = \mu^*$ on $\Omega_{k+1}$.
 \end{proof}

 Theorem \ref{thm:conv1} states that
 if the process of invariantly admissible PI is terminated
 by convergence in a finite number of steps, then
 the solution is guaranteed to be optimal.
 To investigate the general convergence conditions
 in case of that the process does not end in a finite number of steps,
 define the limit set $\hat \Omega$ as
 $\hat \Omega := \bigcap_{i=0}^\infty \Omega_i$.
 Then,
 from \eqref{eq:decreasing V} and $\Omega_{i+1} = \bar \Omega_{c_i}^{\mu_i}$,
 one can see that, for the condition
  \begin{equation}
     \hat \Omega \subseteq \cdots \subseteq \Omega_{i+1} \subseteq \Omega_{i} \subseteq \cdots \subseteq \Omega_1 \subseteq \Omega_0
     \label{eq:monotonicity of Omega sets}
  \end{equation}
 in Assumption \ref{assumption:Omega and c},
 $\{c_i > 0 \}$ should be monotonically decreasing.
 In this case, since $\{c_i\}$ is bounded by zero,
 it converges with this decreasing condition
 to a limit point $\hat c := \lim_{i \to \infty} c_i$
 in a decreasing order
 \begin{equation}
   0 < c_{i+1} \leq c_i \leq \cdots \leq c_1 \leq c_0.
   \label{eq:ci monotone decreasing}
 \end{equation}
 Also note that the limit set $\hat \Omega$ is compact
 since arbitrary intersection of the closed and bounded sets $\Omega_i$
 is also closed and bounded.

 %\begin{assumption}
 %   \label{assumption:hat Omega}
 %   $\hat \Omega$ contains a neighborhood of the origin.
 %\end{assumption}

 \begin{lemma}
 \label{lemma:conv}
  Under $\mu_0 \in \mathcal{A}(\Omega_0)$ and
  Assumptions \ref{assumption:Lipschitz}--\ref{assumption:Omega and c},
  there is a function $\hat V: \hat \Omega \to \mathbb{R}_+$
  such that
  $\{V^{\mu_i}\in \mathcal{C}^1\}$ generated by the invariantly admissible PI
  pointwisely converges to $\hat V$
  on the limit set $\hat \Omega$ as $i \to \infty$,
  in a decreasing order
  \begin{equation}
        0 \leq \hat V(x) \leq V^{\mu_{i+1}}(x) \leq V^{\mu_{i}}(x).
        \label{eq:lemma:conv:decreasing}
  \end{equation}
 \end{lemma}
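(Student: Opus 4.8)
The plan is to reduce the claim to the elementary fact that a monotone non-increasing sequence of real numbers bounded below converges, applied pointwise on $\hat\Omega$. First I would fix an arbitrary $x \in \hat\Omega = \bigcap_{i=0}^\infty \Omega_i$. By Assumption \ref{assumption:Omega and c} and the update rule $\Omega_{i+1} = \bar\Omega_{c_i}^{\mu_i} \subseteq \Omega_i$, we have the nested chain $\hat\Omega \subseteq \cdots \subseteq \Omega_{i+1} \subseteq \Omega_i \subseteq \cdots \subseteq \Omega_0$, so $x$ lies in every $\Omega_i$, and in particular in every $\Omega_{i+1}$. Invoking Theorem \ref{thm:main thm1-1} (whose hypotheses hold under $\mu_0 \in \mathcal{A}(\Omega_0)$ and Assumptions \ref{assumption:Lipschitz}--\ref{assumption:Omega and c}), for each $i$ the policies $\mu_i,\mu_{i+1}$ are invariantly admissible on $\Omega_{i+1}$, each $V^{\mu_i}$ is well-defined and finite on $\Omega_{i+1}$, and on $\Omega_{i+1}$ the chain $0 < V^{\mu_{i+1}}(x) \le V^{\mu_i}(x) \le \cdots \le V^{\mu_0}(x)$ holds. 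Evaluating at our fixed $x$ shows that $\{V^{\mu_i}(x)\}_{i\ge 0}$ is a non-increasing sequence of finite reals.

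Next I would observe that this sequence is bounded below: each $V^{\mu_i}$ is a value function, hence positive definite on its domain, so $V^{\mu_i}(x) \ge 0$ for every $i$. A non-increasing sequence bounded below by $0$ converges, so the pointwise limit $\hat V(x) := \lim_{i\to\infty} V^{\mu_i}(x)$ exists and lies in $[0,\infty) = \mathbb{R}_+$. Since $x \in \hat\Omega$ was arbitrary, this defines a function $\hat V : \hat\Omega \to \mathbb{R}_+$. Passing to the limit in the monotone inequalities (with $\hat V(x)$ the infimum of the sequence) yields $0 \le \hat V(x) \le V^{\mu_{i+1}}(x) \le V^{\mu_i}(x)$ for all $i$, which is exactly \eqref{eq:lemma:conv:decreasing}. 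Finally, $V^{\mu_i}(0)=0$ for all $i$ forces $\hat V(0)=0$, consistent with $\hat V$ mapping into $\mathbb{R}_+$.

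The argument is essentially bookkeeping, so there is no deep obstacle; the only point needing care is that the monotonicity in Theorem \ref{thm:main thm1-1}, stated for $x$ ranging over the $i$-th set $\Omega_{i+1}$, must be chained across all indices simultaneously. This is precisely why one restricts to $\hat\Omega$: a fixed $x \in \hat\Omega$ sits in $\Omega_{i+1}$ for every $i$, so the inequalities $V^{\mu_{j+1}}(x) \le V^{\mu_j}(x)$ — valid for $j \le i$ on $\Omega_{i+1}$ — hold for all $j \ge 0$ at once, making the sequence genuinely monotone there. Note the statement asserts only pointwise convergence and a monotone ordering, not uniformity of the convergence nor continuity or differentiability of $\hat V$, so no compactness or equicontinuity input beyond what Theorem \ref{thm:main thm1-1} already supplies is needed at this stage.
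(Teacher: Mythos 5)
Your argument is correct and follows essentially the same route as the paper's own proof: both fix $x \in \hat\Omega \subseteq \Omega_{i+1}$, invoke the monotone chain \eqref{eq:decreasing V} from Theorem \ref{thm:main thm1-1}, and conclude by the monotone-convergence property of a non-increasing real sequence bounded below by zero. Your additional bookkeeping about chaining the inequalities across all indices and the remark that only pointwise convergence is claimed are accurate but not a departure from the paper's argument.
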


 \begin{proof}
   By \eqref{eq:decreasing V} in Theorem \ref{thm:main thm1-1}
   and $\hat \Omega \subseteq \Omega_{i+1}$,
   we have $0 < V^{\mu_{i+1}}(x) \leq V^{\mu_{i}}(x)$
   for all $x \in \hat \Omega$ and all $i \in \{0,1,2,\cdots\}$.
   Therefore, for any fixed $x \in \hat \Omega$,
   $\{V^{\mu_{i}}(x)\}$ is decreasing and bounded by zero,
   implying the existence of $\hat V$
   to which $\{V^{\mu_{i}}\}$ pointwisely converges
   in the decreasing order \eqref{eq:lemma:conv:decreasing},
   which completes the proof.
 \end{proof}

  The next theorem states
  the conditions for
  the uniform monotonic convergence of
  $\{V^{\mu_i}\}$ and $\mu_i$
  to the optimal solution $V^*$ and $\mu^*$
  on the limit set $\hat \Omega$, respectively.
  For the discussion,
  we denote by $\mathcal{C}^1_\mathcal{A} (\hat \Omega)$ %and $\mathcal{C}^1_{\mathcal{A}_\mathcal{I}} (\Omega)$
  the set of all continuously differentiable value functions
  for the policies that are admissible on the compact limit set
  $\hat \Omega$. That is,
  \begin{equation*}
    \mathcal{C}^1_\mathcal{A} (\hat \Omega)
    :=
    \{ V^\mu \in \mathcal{C}^1(\Omega): \mu \in \mathcal{A}(\hat \Omega) \textrm{ and } \phi(t;x_0, \mu) \in \Omega, \; \forall x_0 \in \hat \Omega, \; \forall t \geq 0 \}.
  \end{equation*}
  Note that $V^{\mu_i} \in \mathcal{C}^1(\Omega_i)$
  generated by the proposed algorithm belongs to $\mathcal{C}^1_\mathcal{A}(\hat \Omega)$
  for any $i \in \{0,1,2,\cdots\}$.
  This is because
  ``$\hat \Omega \subseteq \Omega_i$
  (see \eqref{eq:monotonicity of Omega sets} or Assumption \ref{assumption:Omega and c})
  and $\mu_i \in \mathcal{A}(\Omega_i)$''
  implies $\mu_i \in \mathcal{A}(\hat \Omega)$,
  so
  $V^{\mu_i}$ is a value function
  for $\mu_i \in \mathcal{A}(\hat \Omega) \subseteq \mathcal{A}(\Omega_i)$.

  Next, we define PI operator
  $\mathcal{T}: \mathcal{C}^1_\mathcal{A} (\hat \Omega) \to \mathcal{C}^1_\mathcal{A} (\hat \Omega)$
  as a composite mapping
  $\mathcal{T} = \mathcal{T}^E \circ \mathcal{T}^{I}$,
  where $\mathcal{T}^{E}$ and $\mathcal{T}^{I}$
  are policy evaluation and improvement operators
  defined as follows.
  \begin{enumerate}
    \item
      $\mathcal{T}^{E}:
      \mathcal{A}(\hat \Omega) \rightarrow \mathcal{C}^1_{\mathcal{A}}(\hat \Omega)$
      is a map from an admissible policy $\mu \in \mathcal{A}(\hat \Omega)$
      to the corresponding value function
      $V^\mu \in \mathcal{C}^1_{\mathcal{A}}(\hat \Omega)$
      satisfying \eqref{eq:Lyapunov equation for mu}.
      That is,
      \[
        V^{\mu} = \mathcal{T}^{E}(\mu).
      \]
    \item
      $\mathcal{T}^{I}:
      \mathcal{C}^1_{\mathcal{A}}(\hat \Omega) \rightarrow \mathcal{A}(\hat \Omega)$
      is a map from
      a value function $V^\mu \in \mathcal{C}^1_{\mathcal{A}}(\hat \Omega)$ to
      the admissible policy
      $\mu^+ \in \mathcal{A}(\hat \Omega)$
      satisfying $\mu^+ = -{1 \over 2} R^{-1} g^T \nabla V^\mu$.
      That is,
      \[
        \mu^+ = \mathcal{T}^{I}(V^{\mu}).
      \]
  \end{enumerate}

  So, for a given value function $V^\mu \in \mathcal{C}^1_{\mathcal{A}}(\hat \Omega)$
  \textrm{ satisfying } $(\nabla V^\mu)^T (f + g\mu) = 0$, $\mathcal{T}$
  yields the value function
  $V^{\mu^+} \in \mathcal{C}^1_{\mathcal{A}}(\hat \Omega)$
  satisfying $(\nabla V^{\mu^+})^T (f + g\mu^+) = 0$
  for the improved admissible policy
    $\mu^+ = -{1 \over 2} R^{-1} g^T \nabla V^\mu$.
  In a compact form,
  \[
        V^{\mu^+} = \mathcal{T}(V^\mu).
  \]
  Note that $\mathcal{T}$ represents one cycle of policy evaluation and improvement;
  the value functions $V^{\mu_i}$ and $V^{\mu_{i+1}}$
  generated by the PI method satisfy $V^{\mu_{i+1}} = \mathcal{T}(V^{\mu_i})$.
  Also note that if $V^{\mu_k} = V^{\mu_{k+1}}$, then
  $\mathcal{T}$ satisfies $\mathcal{T}(V^{\mu_k}) =  V^{\mu_{k}}$.
  This implies that the fixed point of the operator $\mathcal{T}$
  corresponds to the optimal value function $V^*$
  since $V^{\mu_k} = V^{\mu_{k+1}}$ implies $V^{\mu_k} = V^{\mu_{k+1}} = V^*$
  by Theorem \ref{thm:conv1}.

  \begin{theorem}
  \label{thm:main thm2}
  Suppose $\mathcal{T}$ is continuous
        and
        the limit function $\hat V$ in \eqref{eq:lemma:conv:decreasing}
        belongs to
        $\mathcal{C}^1_A(\hat \Omega)$.
  Then,
  the sequence of value functions $\{V^{\mu_i}\in \mathcal{C}^1\}$
  generated by the invariantly admissible PI
  under $\mu_0 \in \mathcal{A}(\Omega_0)$ and
  Assumptions \ref{assumption:Lipschitz}--\ref{assumption:Omega and c}
  uniformly converges to the optimal solution $V^*$
  on $\hat \Omega$ in a decreasing order
  \begin{equation}
          0 < V^{*}(x) \leq \cdots \leq V^{\mu_{i+1}}(x) \leq V^{\mu_{i}}(x) \leq \cdots \leq V^{\mu_{0}}(x).
          \label{eq:main thm2:conv:decreasing}
  \end{equation}
  \end{theorem}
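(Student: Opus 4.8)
The plan is to combine a Dini-type argument for the uniform convergence with a fixed-point argument for optimality. First I would invoke Lemma~\ref{lemma:conv}: under $\mu_0\in\mathcal{A}(\Omega_0)$ and Assumptions~\ref{assumption:Lipschitz}--\ref{assumption:Omega and c}, the sequence $\{V^{\mu_i}\}$ is well defined on the compact limit set $\hat\Omega$, each $V^{\mu_i}$ is continuous there (being $\mathcal{C}^1$), and $V^{\mu_i}(x)$ decreases pointwise to $\hat V(x)$ as in \eqref{eq:lemma:conv:decreasing}. Since $\hat V\in\mathcal{C}^1_\mathcal{A}(\hat\Omega)$ by hypothesis, $\hat V$ is continuous on $\hat\Omega$; a monotone sequence of continuous functions on a compact set that converges pointwise to a continuous limit converges uniformly (Dini's theorem), so $V^{\mu_i}\to\hat V$ uniformly on $\hat\Omega$.

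Next I would identify $\hat V$ as a fixed point of the PI operator $\mathcal{T}$. The iterates satisfy $V^{\mu_{i+1}}=\mathcal{T}(V^{\mu_i})$ with $V^{\mu_i},\hat V\in\mathcal{C}^1_\mathcal{A}(\hat\Omega)$; since $\mathcal{T}$ is assumed continuous and both $V^{\mu_i}$ and $V^{\mu_{i+1}}$ converge to $\hat V$, passing to the limit $i\to\infty$ in $V^{\mu_{i+1}}=\mathcal{T}(V^{\mu_i})$ yields $\hat V=\mathcal{T}(\hat V)$.

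I would then turn this into the HJB equation. Let $\hat\mu$ be the admissible policy on $\hat\Omega$ with $V^{\hat\mu}=\hat V$ (it exists because $\hat V\in\mathcal{C}^1_\mathcal{A}(\hat\Omega)$) and set $\hat\mu^+=\mathcal{T}^I(\hat V)=-\frac{1}{2}R^{-1}g^T\nabla\hat V$. Then $\mathcal{T}(\hat V)=\hat V$ reads $V^{\hat\mu^+}=\hat V=V^{\hat\mu}$, so substituting $\hat\mu^+$ into its Lyapunov equation \eqref{eq:Lyapunov equation for mu} and using the improvement formula gives, for all $x\in\hat\Omega$,
\[
0=Q(x)+(\nabla\hat V)^{T}f(x)-\frac{1}{4}(\nabla\hat V)^{T}g(x)R^{-1}g^{T}(x)\nabla\hat V,
\]
i.e.\ $\hat V$ is a $\mathcal{C}^1$ positive-definite solution of the HJB equation \eqref{eq:HJB} on $\hat\Omega$. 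Since $\hat\Omega\subseteq\Omega_0\subseteq\Omega^*$ by Assumption~\ref{assumption:Omega and c}, the uniqueness in Assumption~\ref{assumption:HJB} --- as in the argument of Theorem~\ref{thm:conv1} --- forces $\hat V=V^*$ on $\hat\Omega$, and \eqref{eq:optimal policy u*} then gives $\hat\mu^+=\mu^*$ on $\hat\Omega$. Combining $\hat V=V^*$ with the monotone bounds \eqref{eq:decreasing V} and \eqref{eq:lemma:conv:decreasing} and the positive definiteness of $V^*$ produces the chain \eqref{eq:main thm2:conv:decreasing}, which together with the uniform convergence established above finishes the proof.

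The main obstacle I expect is the limit interchange $\mathcal{T}(\lim_i V^{\mu_i})=\lim_i\mathcal{T}(V^{\mu_i})$: it is legitimate only if the topology in which $\mathcal{T}$ is assumed continuous is compatible with the uniform ($\mathcal{C}^0$) convergence actually produced by Dini's theorem, and only if $\hat V$ really lies in the domain $\mathcal{C}^1_\mathcal{A}(\hat\Omega)$ of $\mathcal{T}$ --- which is exactly why the hypothesis $\hat V\in\mathcal{C}^1_\mathcal{A}(\hat\Omega)$ is assumed. A minor related point is that Dini's theorem yields only $\mathcal{C}^0$-convergence of $\{V^{\mu_i}\}$, not $\mathcal{C}^1$-convergence; since the theorem asserts uniform convergence only of the value functions (and not of the policies), this does not affect the argument.
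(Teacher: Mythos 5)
Your proposal is correct and follows essentially the same route as the paper's proof: Dini's theorem on the compact set $\hat\Omega$ for uniform convergence, continuity of $\mathcal{T}$ to pass to the limit in $V^{\mu_{i+1}}=\mathcal{T}(V^{\mu_i})$ and obtain the fixed point $\mathcal{T}(\hat V)=\hat V$, and then identification of the fixed point with $V^*$ via the HJB equation and Assumptions~\ref{assumption:HJB} and \ref{assumption:Omega and c}. The only difference is cosmetic: you inline the derivation of the HJB equation from the fixed-point property, whereas the paper delegates that step to the discussion preceding the theorem and to Theorem~\ref{thm:conv1}.
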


  \begin{proof}
    First, note that
    Lemma \ref{lemma:conv} guarantees
    the existence of the limit function $\hat V$ to which
    $\{V^{\mu_i} \in \mathcal{C}^1 \} \subseteq \mathcal{C}^1_A(\hat \Omega)$
    converges pointwisely in a decreasing order \eqref{eq:lemma:conv:decreasing}.
    Since we assume $\hat V \in \mathcal{C}^1_A(\hat \Omega)$,
    $\hat V$ is continuous on $\hat \Omega$.
    So, the convergence $V^{\mu_i} \to \hat V$ is uniform
    on the compact set $\hat \Omega$ by Dini's theorem.
    Similarly, we have the uniform convergence $\mathcal{T}(V^{\mu_i}) = V^{\mu_{i+1}} \to \hat V$ on $\hat \Omega$.
    Therefore, $\mathcal{T}(\hat V) = \hat V$ by continuity of $\mathcal{T}$,
    \emph{i.e.}, $\hat V$ is the fixed point of $\mathcal{T}$.
    Since the fixed point of $\mathcal{T}$ corresponds to the optimal solution $V^*$
    and $\hat \Omega \subseteq \Omega^*$ by Assumption \ref{assumption:Omega and c},
    we have $\hat V = V^*$ on $\hat \Omega$, and \eqref{eq:main thm2:conv:decreasing}
    can be obtained from \eqref{eq:lemma:conv:decreasing}.
  \end{proof}

  \begin{remark}
  In the convergence analysis \cite{Beard1995} of the SAM given in \cite{Beard1995,Beard1997},
  it was implicitly assumed that $\hat V$ is continuously differentiable
  and that $\hat V$ satisfies $H(x, \hat \mu, \nabla \hat V) = 0$
  where $\hat \mu$ is given by $\hat \mu = -R^{-1}g^T \nabla \hat V$
  (see Theorem 5.3.1 in \cite{Beard1995} and its proof).
  The same assumptions exist in the convergence proofs of the variants \cite{Abu-Khalaf2005,Saridis1979,Murray2003}.
  However, the convergence of $C^1$ functions $V^{\mu_i} \to \hat V$
  proven in Lemma \ref{lemma:conv}
  and \cite{Abu-Khalaf2005,Beard1995,Saridis1979,Murray2003}
  does not imply the convergence $\nabla V^{\mu_i} \to \nabla \hat V$ in general,
  and
  cannot guarantee even the differentiability of the limit function $\hat V$ (see \cite{Marsden1993}).
  %for $\hat V \in \mathcal{C}^1$ and $\hat V = V^*$,
  %it should be additionally proven that
  %$\hat V \to \nabla V^*$ uniformly on the domain.
  %Instead of showing the uniform convergence $\hat V \to \nabla V^*$,
  In Theorem \ref{thm:main thm2} of this paper,
  we have exactly and rigorously stated
  the conditions for $\hat V = V^*$ (and uniform convergence $V^{\mu_i} \to V^*$)
  regarding the proposed invariantly admissible PI;
  similar conditions were given only in \cite{Leake1967} for the usual SAM
  without admissible region update.
  \end{remark}

  \subsection{Determination of $c_i > 0$ of $\bar \Omega_{c_i}^{\mu_i}$ ($=\Omega_{i+1}$)}

  Under Assumption \ref{assumption:Omega and c},
  the region $\Omega_{i}$ becomes more conservative
  as the learning continues (see also \eqref{eq:ci monotone decreasing}).
  %since $\Omega_{i+1} = \bar \Omega_{c_i}^{\mu_i} \subseteq \Omega_i$.
  That is, as can be seen from Fig. \ref{fig:SetOfOmegas},
  $\bar \Omega_{c_{i}}^{\mu_{i}}$ ($=\Omega_{i+1}$)
  is necessarily smaller than or equal to
  both $\bar \Omega_{c_{i-1}}^{\mu_{i}}$ and $\bar \Omega_{c_{i-1}}^{\mu_{i-1}}$ ($=\Omega_i$).
  Here, the set $\bar \Omega_{c_{i-1}}^{\mu_{i}}$ is obviously larger than or equal to
  $\bar \Omega_{c_{i-1}}^{\mu_{i-1}}$ for the same $c_{i-1}$ by the monotonic decreasing property \eqref{eq:decreasing V}.

  We now propose
  an invariant admissible region update rule
  to alleviate the conservativeness of the next region $\Omega_{i+1}$.
  Under Assumption \ref{assumption:Omega and c},
  the proposed update rule determines at each $i$-th iteration
  the largest region $\bar \Omega_{c_i^*}^{\mu_i}$
  contained by $\Omega_i$ ($=\bar \Omega_{c_{i-1}}^{\mu_{i-1}}$ for $i \geq 1$).
  The update starts with an initial admissible region given by $\Omega_0 = \bar B_0(r)$,
  where $\bar B_0(r)$ is a closed-ball at the origin with $r$
  determined to satisfy $\mu_0 \in \mathcal{A}(\bar B_0(r))$.
  Then, at each $i$-th iteration,
  the update rule determines the radius $c_{i}^*$ of the next region $\Omega_{i+1}$ by
  \begin{equation}
    c_{i}^* = \min \big \{ V^{\mu_{i}}(x): x \in \partial \Omega_i \big \},
    \label{eq:c star equation}
  \end{equation}
  where
  $\partial \Omega_i$ is the boundary of $\Omega_i$.
  With this $c_i^*$, the next region is updated by $\Omega_{i+1} = \bar \Omega_{c_i^*}^{\mu_i}$

  The maximum region $\bar \Omega_{c_{i}^*}^{\mu_{i}}$ and the normal region $\bar \Omega_{c_{i}}^{\mu_{i}}$
  at $i$-th step are shown in Fig. \ref{fig:SetOfOmegas}.
  Compared to the normal one,
  $\bar \Omega_{c_{i}^*}^{\mu_{i}}$ has the maximum radius $c_{i}^* > 0$
  on the constrained set $\Omega_i$
  ($=\bar \Omega_{c_{i-1}}^{\mu_{i-1}}$ for $i \geq 1$)
  while satisfying $\bar \Omega_{c_{i}}^{\mu_{i}} \subseteq \bar \Omega_{c_{i}^*}^{\mu_{i}}$
  without violating $\Omega_{i+1} \subseteq \Omega_i$ in Assumption \ref{assumption:Omega and c}
  as illustrated in Fig. \ref{fig:SetOfOmegas}.

  \begin{remark}
    \label{remark:4}
    Even if the region $\Omega_{i+1}$ is updated by $\Omega_{i+1}= \bar \Omega_{c_{i}^*}^{\mu_{i}}$ with \eqref{eq:c star equation},
    it may become very small or narrow in some cases by \eqref{eq:monotonicity of Omega sets} as $i$ increases.
    In this case, $\Omega_{i+1}$ can be enlarged at some $i$-th update step
    by calculating a larger feasible compact subset $\Upsilon_i$ of $R_A(\mu_i)$ such that $\Omega_i \subseteq \Upsilon_i \subseteq \mathcal{D}$
    and then determining $\Omega_{i+1}$ by $\Omega_{i+1} = \bar \Omega_{\alpha_{i}^*}^{\mu_{i}} $ with $\alpha_i^*$ chosen by
    \begin{equation}
        \alpha_{i}^* = \min \big \{ V^{\mu_{i}}(x): x \in \partial \Upsilon_i \big \}.
        \label{eq:alpha star equation}
    \end{equation}
    In this case, $\mu_{i+1} \in \mathcal{A}_\mathcal{I}(\Omega_{i+1})$ is guaranteed by Theorem \ref{thm:main thm1},
    and by $\Omega_i \subseteq \Upsilon_i$, we have $c_i^* \leq \alpha_i^*$.
    Therefore,
    the next domain $\Omega_{i+1}$ updated by \eqref{eq:alpha star equation} is essentially larger than
    that updated by \eqref{eq:c star equation}, resulting in the larger final domain $\hat \Omega$ at last.
  \end{remark}
  \begin{figure} [t]
    \centering
    \epsfig{figure=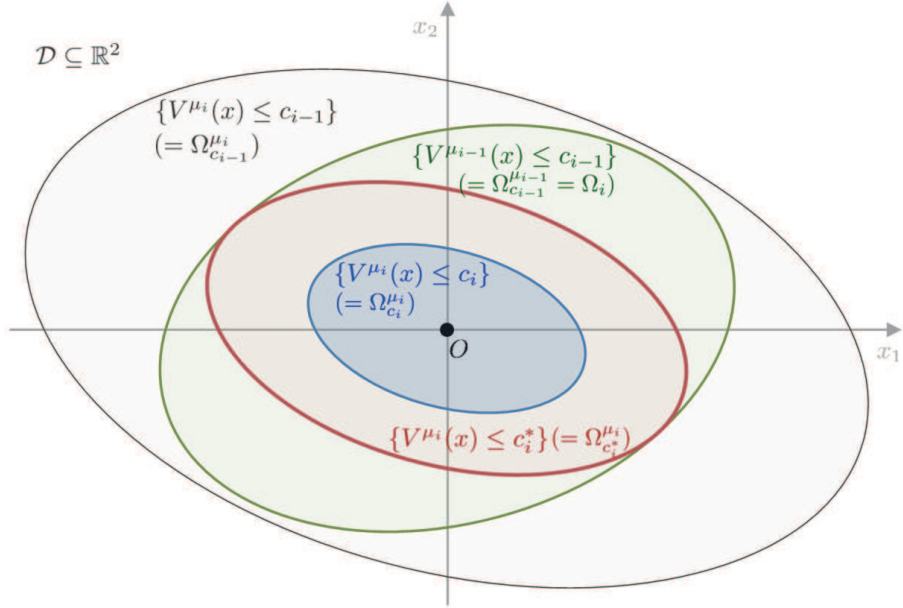, width=12cm}
    \caption{An illustration of $\bar \Omega^{\mu}_c$-sets in $\mathbb{R}^2$ and their relations.}
    \label{fig:SetOfOmegas}
  \end{figure}

 \begin{remark}
   As mentioned in Section 1,
   there are many PI algorithms \cite{Lee2012_WCCI,Lewis2009,Murray2002,Vrabie2009b,Lee2012}
   for the optimal control problem \eqref{eq:system}--\eqref{eq:performance measure},
   %in a similar manner to the SAM \cite{Beard1995,Beard1997};
   ideally generating the same policy and value function sequences
   ($\{\mu_i\}_{i=0}^\infty$ and $\{V^{\mu_i}\}_{i=0}^\infty$)
   as the SAM \cite{Beard1995,Beard1997}.
   So, the proposed invariantly admissible PI method can be easily extended
   to those equivalent PI and reinforcement learning algorithms
   to improve the closed-loop stability,
   by incorporating the (invariant) admissible region update step and the update rules
   \eqref{eq:c star equation} and \eqref{eq:alpha star equation} into those algorithms.
 \end{remark}

%  \begin{remark}
%    The invariantly admissible PI and the region update rule can be easily extended
%    to the other PI-based methods equivalent to the SAM in \cite{Beard1995,Beard1997}
%    by incorporating the invariant admissible region update routine
%    right after the policy improvement step of the methods.
%    This modification can improve the closed-loop stability of the proposed method.
%  \end{remark}

  \begin{figure} [h]
  \centering
  \subfigure[Evolution of $\Omega_i$]{\epsfig{figure=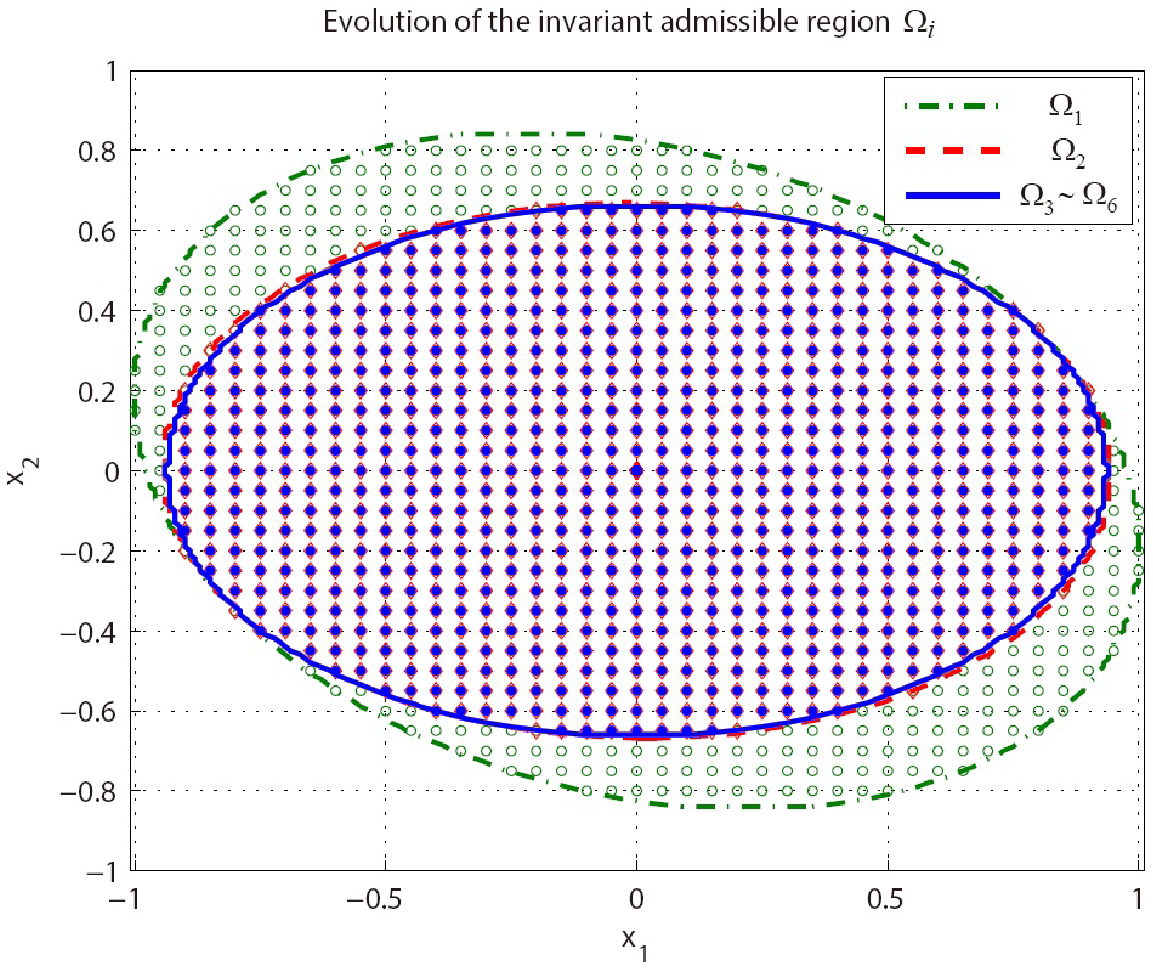,width=10cm}}
  \subfigure[Evolution of $w_j$ ($j=1,2,3$)]{\epsfig{figure=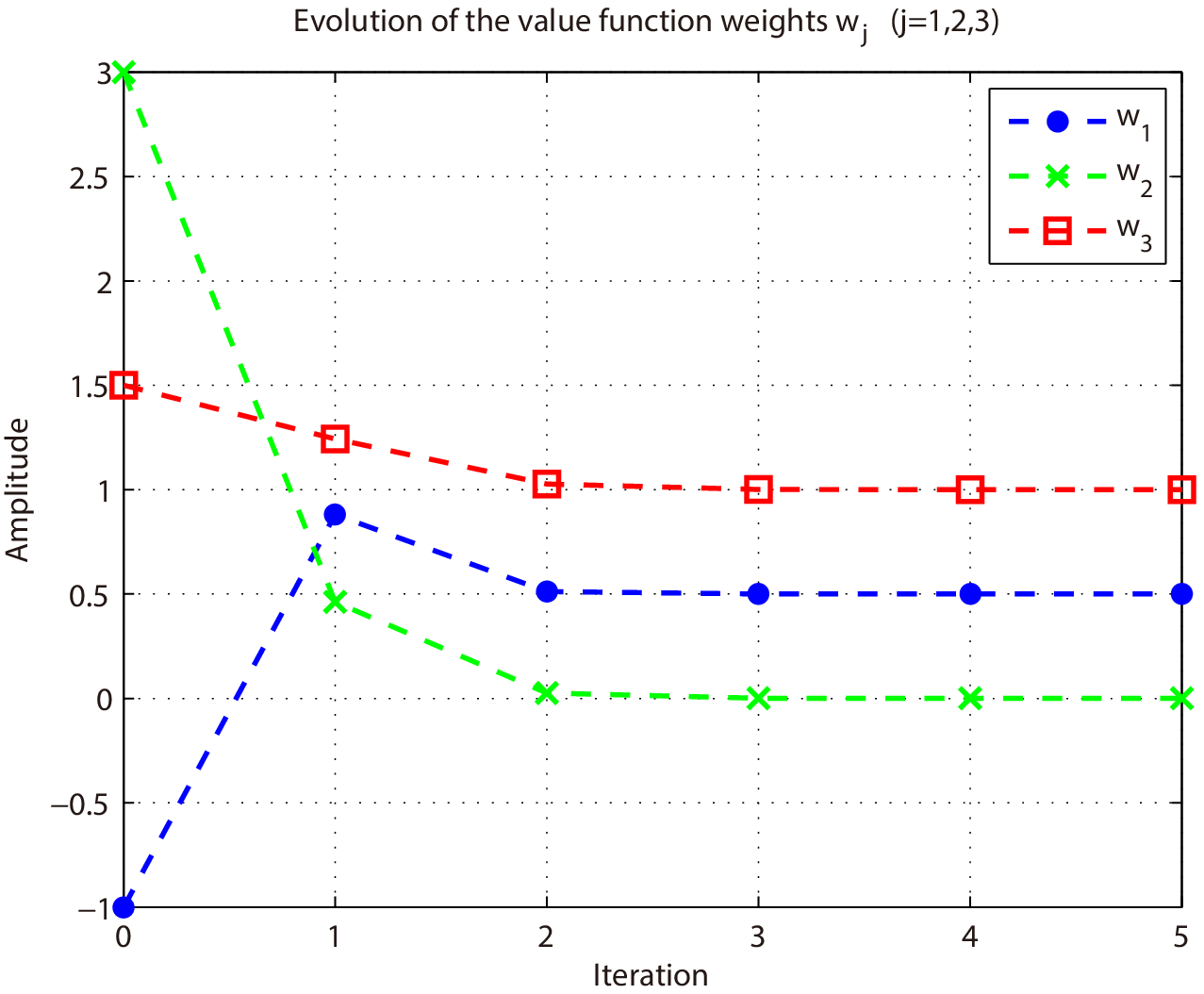,width=10cm}}
  \caption{Simulation results for nonzero case $w_{1,0} = -1$, $w_{2,0} = 3$, and $w_{3,0} = 1.5$:
  (a) evolution of $\Omega_i$,
  (b) evolution of the weights $w_j$ ($j = 1,2,3$).}
  \label{fig:sim1}
  \end{figure}

  \begin{figure} [h]
  \centering
  \subfigure[Evolution of $\Omega_i$]{\epsfig{figure=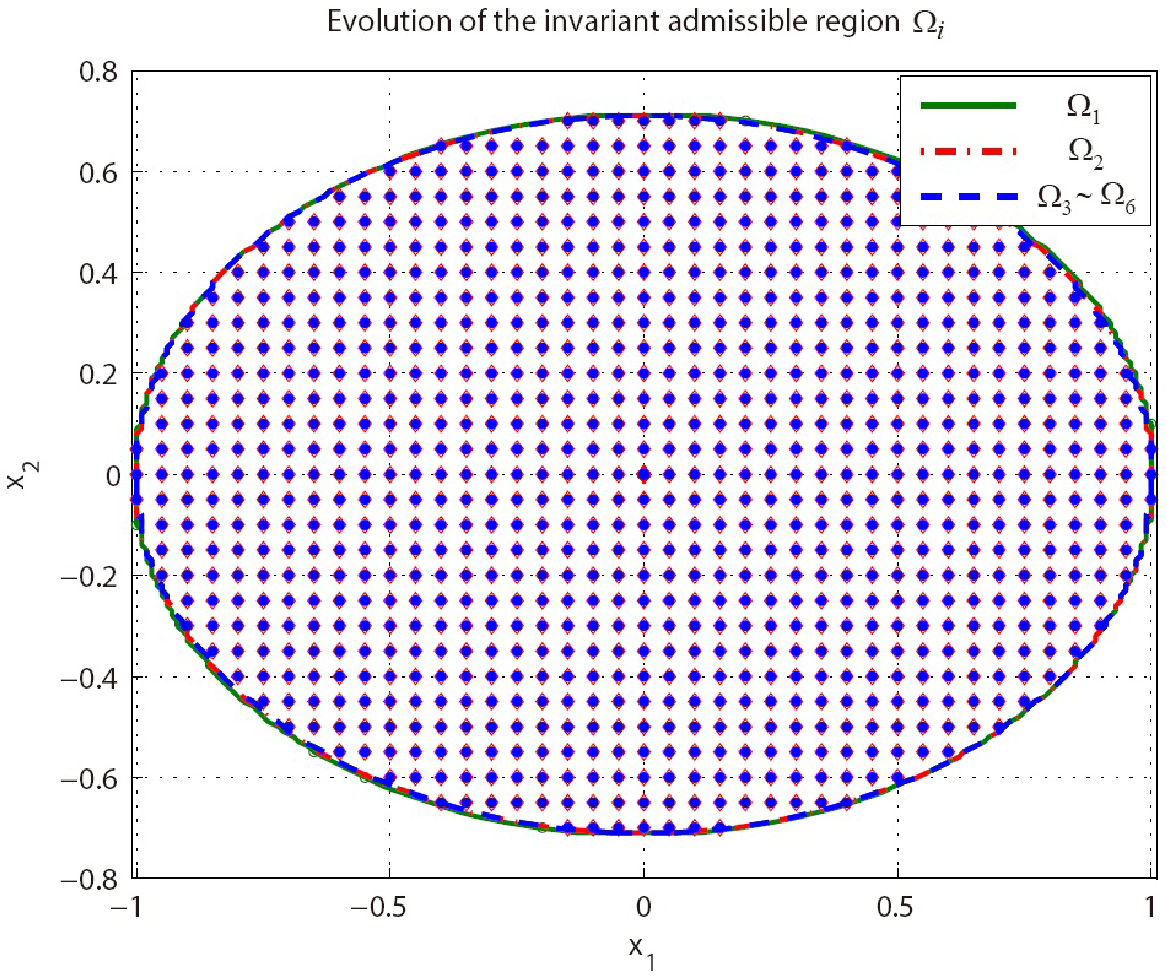,width=10cm}}
  \subfigure[Evolution of $w_j$ ($j=1,2,3$)]{\epsfig{figure=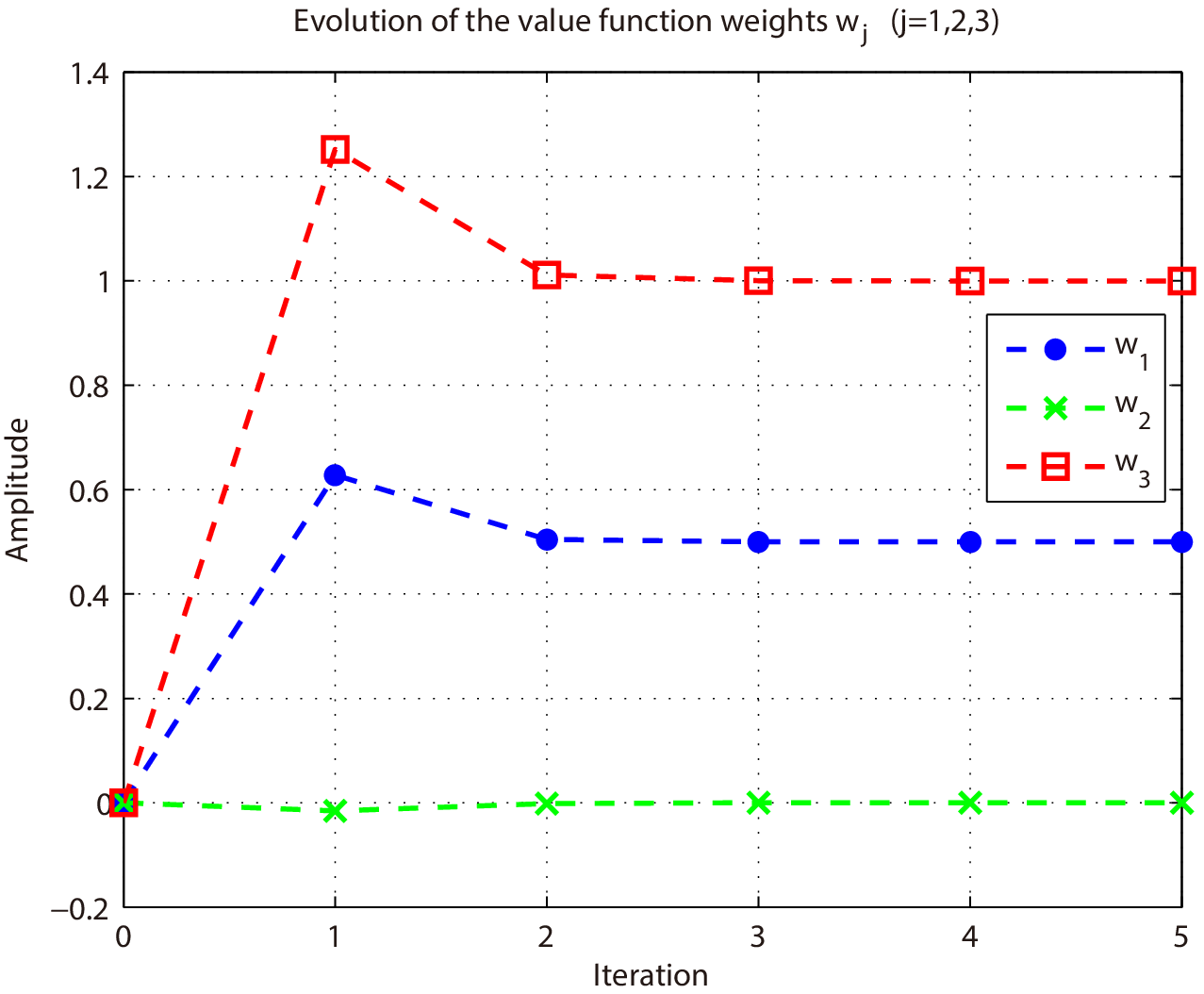,width=10cm}}
  \caption{Simulation results with zero initial weights $w_{1,0} = -1$, $w_{2,0} = 3$, and $w_{3,0} = 1.5$:
  (a) evolution of $\Omega_i$,
  (b) evolution of the weights $w_j$ ($j = 1,2,3$).}
  \label{fig:sim2}
  \end{figure}

\section{Numerical simulations}
  To illustrate the proposed PI method and its effectiveness,
  we performed the numerical simulations for the following nonlinear system:
  \begin{equation}
      \begin{cases}
      \dot x_1 = -x_1 + x_2\\
      \dot x_2 = -(x_1+x_2)/2 + x_2 (\sin^2 x_1)/2 + (\sin x_1)u
      \end{cases}
  \label{eq:sim:target nonlinear system}
  \end{equation}
  and the performance measure \eqref{eq:performance measure}
  with $r(x,u) = x_1^2 + x_2^2 + u^2$.
  This optimal control problem was also shown
  in \cite{Vrabie2009b} to simulate their nonlinear integral PI method
  which, in ideal cases, generates the same sequences of policies and value functions
  to the SAM in \cite{Beard1995,Beard1997}.
  Using the converse HJB approach \cite{Nevistic1996},
  the optimal solution $(V^*, \mu^*)$ is given by
  \[
      \mu^*(x)=- x_2 \sin x_1 \;\; \textrm{ and } \;\; V^*(x) = \frac{1}{2}x_1^2 + x_2^2.
  \]
  As in \cite{Vrabie2009b}, the value function $V^{\mu}(x)$ is
  parameterized as
  $V^{\mu}(x) = w_1 x_1^2 + w_2 x_1 x_2 + w_3 x_2^2$,
  where $w_j$ ($j = 1,2,3$) are the weights to be determined in policy evaluation
  of the proposed PI method at every iteration.

  In the simulations,
  the sample points $x$ used in the $i$-th policy evaluation step
  are collected only in the $i$-th admissible set $\Omega_i$ of the state space,
  with the same sampling interval $\Delta x_1 = \Delta x_2 = 10^{-2}$.
  Here, the initial admissible region is given by $\Omega_0 = \bar B_{0}(1)$
  as in \cite{Vrabie2009b},
  where $\bar B_{0}(1) = \{x \in \mathbb{R}^2: |x_1| \leq 1, |x_2| \leq 1\}$.
  Then, the invariant admissible region update agent in the simulations
  finds, at each $i$-th step,
  the optimal radius $c^*_i$ satisfying \eqref{eq:c star equation}
  to determine the next largest invariant admissible region $\Omega_{i+1} = \bar \Omega_{c^*_i}^{\mu_i} \subseteq \Omega_i$
  on the constraint set $\Omega_i$.

  Fig. \ref{fig:sim1} illustrates the simulation result
  for nonzero initial weights $w_{1,0} = -1$, $w_{2,0} = 3$, and $w_{3,0} = 1.5$.
  As can be seen from Fig. \ref{fig:sim1}(b), the weights $w_j$ ($j=1,2,3$) converge to the optimal values
  as expected.
  In this case,
  the initial weights deviated far from the optimal ones,
  and the rates of change of the weights are highest between $i=0$ and $1$.
  From Figs. \ref{fig:sim1}(a) and (b), one can see that
  these initial characteristics cause the rapid changes of the principal axes of
  the ellipsoidal curve $V^{\mu_1}(x) = c_1^*$,
  making the next region $\Omega_2$ rather conservative.
  On the contrary,
  as shown in Fig. \ref{fig:sim1}(b),
  the invariant admissible region $\Omega_i$ becomes stationary and converges
  as the weights $w_j$'s converge to the optimal ones.
  Here, the region $\Omega_i$ can be enlarged
  by providing the larger initial admissible domain $\Omega_0$,
  or using the method \eqref{eq:alpha star equation} in Remark \ref{remark:4}
  with a feasible larger subset $\Upsilon_i$,
  or
  making the initial weights close to the optimal ones.
%  Though not presented in this paper,
%  one can also enlarge the regions
%  by finding at some $i$-th iteration
%  a next admissible domain $\hat \Omega_{i+1}$ larger than $\bar \Omega_{c^*_i}^{\mu_i}$
%  and then continuing the process.

  The simulation results for zero initial weights
  $w_{1,0} = w_{2,0} = w_{3,0} = 0$ are given in Fig. \ref{fig:sim2}.
  Compared to the previous nonzero case,
  the initial weights were set close to the optimal ones,
  and the deviations of the weights are relatively small (Fig. \ref{fig:sim2}(b)).
  These aspects result in
  the limit set $\hat \Omega$ shown in Fig. \ref{fig:sim2}(a),
  being larger and less conservative
  than the limit set $\hat \Omega$ in Fig. \ref{fig:sim1}(a).
  As can be seen from Fig. \ref{fig:sim2}(a),
  there is no significant change in the principal axes,
  making $\hat \Omega$ approximately equal to $\Omega_1$.
  While the existing PI generates $\mu_i$ over the whole domain $\mathcal{D}$,
  which is time consuming and does not guarantee the admissibility on $\mathcal{D}$,
  the proposed PI generates $\mu_i$ and the region $\Omega_i$,
  on which $\mu_i$ is invariantly admissible.
  As discussed earlier,
  the existing PI does not guarantee the admissibility on the whole domain $\mathcal{D}$
  unless the aforementioned strict assumptions regarding feasibility and stability are imposed.

  %In both cases,
  %the proposed invariantly admissible PI
  %provides the optimal solution $(\mu^*, V^*)$
  %and the corresponding invariant admissible region $\Omega^*$ in a safe manner.
  %Note that the existing PI only provides the optimal solution $(\mu^*, V^*)$
  %and during t
  %and the stability
  %the weights are not varying as much as the previous case,

\section{Conclusions}

  This paper precisely defined an invariantly admissible policy,
  the refined notion of an admissible policy in terms of feasibility,
  closed-loop Lyapunov stability, and invariance.
  Then, as a generalization of the existing SAM \cite{Beard1995,Beard1997},
  the invariantly admissible PI method was proposed
  that has the general update rule of the next region for invariant admissibility.
  %consists of three consecutive steps---evaluating the value function, updating the next policy,
  %and finding the next invariant admissible region.
  The update rule for the next compact region based on the current value function was also proposed,
  and under this update rule,
  we mathematically showed the invariant admissibility
  of the generated policies and regions $(\mu_i, \Omega_{i})$;
  the monotonic decreasing property and
  uniform convergence
  of the sequence of corresponding value functions
  were also presented under certain conditions.
  Unlike the existing SAM \cite{Beard1995,Beard1997},
  the proposed PI method and the update rule did not implicitly assume
  the feasibility and the closed-loop stability \emph{on the Lyapunov domain}
  while the algorithm runs.
  Finally, numerical simulations were provided
  to illustrate the proposed PI method and its effectiveness.

%% The Appendices part is started with the command \appendix;
%% appendix sections are then done as normal sections
%% \appendix

%% \section{}
%% \label{}

%% If you have bibdatabase file and want bibtex to generate the
%% bibitems, please use
%%
\bibliographystyle{elsarticle-num}
\bibliography{AdmissibilityOfPI}

%% else use the following coding to input the bibitems directly in the
%% TeX file.

%\begin{thebibliography}{00}

%% \bibitem{label}
%% Text of bibliographic item

%\bibitem{}

%\end{thebibliography}
\end{document}